\def\E{\ifmmode{\mathbb E}\else{$\mathbb E$}\fi} 
\def\N{\ifmmode{\mathbb N}\else{$\mathbb N$}\fi} 
\def\R{\ifmmode{\mathbb R}\else{$\mathbb R$}\fi} 
\def\Q{\ifmmode{\mathbb Q}\else{$\mathbb Q$}\fi} 
\def\C{\ifmmode{\mathbb C}\else{$\mathbb C$}\fi} 
\def\H{\ifmmode{\mathbb H}\else{$\mathbb H$}\fi} 
\def\Z{\ifmmode{\mathbb Z}\else{$\mathbb Z$}\fi} 
\def\P{\ifmmode{\mathbb P}\else{$\mathbb P$}\fi} 
\def\T{\ifmmode{\mathbb T}\else{$\mathbb T$}\fi} 
\def\SS{\ifmmode{\mathbb S}\else{$\mathbb S$}\fi} 
\def\DD{\ifmmode{\mathbb D}\else{$\mathbb D$}\fi} 
\newcommand{\ben}{\begin{enumerate}}
\newcommand{\een}{\end{enumerate}}
\newcommand{\be}{\begin{equation}}
\newcommand{\ee}{\end{equation}}
\newcommand{\bea}{\begin{eqnarray}}
\newcommand{\eea}{\end{eqnarray}}
\newcommand{\bc}{\begin{center}}
\newcommand{\ec}{\end{center}}
\newtheorem{thm}{Theorem}[section]
\newtheorem{cor}[thm]{Corollary}
\newtheorem{lem}[thm]{Lemma}
\newtheorem{prop}[thm]{Proposition}
\theoremstyle{definition}
\newtheorem{defn}{Definition}[section]
\theoremstyle{remark}
\newtheorem{rem}{\rm\bfseries{Remark}}[section]
\newtheorem{exm}[rem]{\rm\bfseries{Example}}
\newcommand{\OP}{\operatorname}
\newcommand{\CP}{\ensuremath{\C}P}
\DeclareMathAlphabet{\mathdj}{U}{msb}{m}{n}
\newcommand{\id}{\operatorname{Id}}
\newcommand{\pt}{{\operatorname{pt}}}
\begin{document}

\subjclass[2010]{Primary 53D12; Secondary 53D42}

\title[Bulky Hamiltonian isotopies of tori]{Bulky Hamiltonian isotopies of Lagrangian tori with applications}

\author[DIMITROGLOU RIZELL]{Georgios Dimitroglou Rizell}

\thanks{The author is supported by the grant KAW 2016.0198 from the Knut and Alice Wallenberg Foundation.}

\address{Department of Mathematics, Uppsala University, Box 480, SE-751 06, Uppsala, Sweden.}
\email{georgios.dimitroglou@math.uu.se}

\begin{abstract}
We exhibit monotone Lagrangian tori inside the standard symplectic four-dimensional unit ball that become Hamiltonian isotopic to the Clifford torus, i.e.~the standard product torus, only when considered inside a strictly larger ball (they are not even symplectomorphic to a standard torus inside the unit ball). These tori are then used to construct new examples of symplectic embeddings of toric domains into the unit ball which are symplectically knotted in the sense of J.~Gutt and M.~Usher. We also give a characterisation of the Clifford torus inside the ball as well as the projective plane in terms of quantitative considerations; more specifically, we show that a torus is Hamiltonian isotopic to the Clifford torus whenever one can find a symplectic embedding of a sufficiently large ball in its complement.
\end{abstract}

\keywords{Symplectic embeddings, Lagrangian tori, Polydiscs, Symplectically knotted embeddings, Gromov width}

\maketitle

\section{Introduction and results}
The focus in this paper is on two-dimensional Lagrangian tori inside the standard four-dimensional Liouville manifold $(\C^2,\omega_0=d\lambda_0)$ as well as the projective plane $(\CP^2,\omega_{\OP{FS}})$. We begin by recalling the basic definitions.

The Liouville manifold
$$(\C^2,\omega_0=dx_1\wedge dy_1 + dx_2\wedge dy_2)$$
is equipped with the standard Liouville form
$$\lambda_0=\frac{1}{2}\sum_{i=1}^2(x_idy_i-y_idx_i), \:\: d\lambda_0=\omega_0.$$
 Denote by $\zeta_0$ the corresponding Liouville vector field, which generates the flow
\begin{gather*}
\phi^t_{\lambda_0} \colon (\C^2,d\lambda_0) \to (\C^2,e^{-t}d\lambda_0),\\
\phi^t_{\lambda_0}(\mathbf{z})=e^{t/2}\mathbf{z}.
\end{gather*}
To set up notation, we will denote by $B^{2n}_x(r)$ and $D^{2n}_x(r)$ the open and closed balls inside $\C^n$, respectively, which are centred at the point $x \in \C^n$ and of radius $r>0$. Further, denote by $S^{2n-1}(r) \coloneqq \partial D^{2n}_0(r) \subset \C^n$ the sphere of radius $r>0.$ We also write $B^{2n} \coloneqq B^{2n}_0(1)$, $D^{2n} \coloneqq D^{2n}_0(1)$, and $S^{2n-1} \coloneqq S^{2n-1}(1)$.

The projective plane $(\CP^2,\omega_{\OP{FS}})$ equipped with the standard Fubini--Study form can be obtained by collapsing the boundary $S^3$ of $(D^4,\omega_0)$ to a line $\ell_\infty \cong \CP^1$ via symplectic reduction; in particular, the symplectic area of the line class $[\ell] \in H_2(\CP^2)$ is then equal to $\int_\ell \omega_{\OP{FS}}=\pi$.

A {\bf Lagrangian} submanifold is a half-dimensional manifold on which the symplectic form vanishes. In the case of $\C^2$ the last condition is equivalent to the requirement that $\lambda_0$ pulls back to a closed form. A {\bf Lagrangian isotopy} is a smooth isotopy through Lagrangian embeddings; recall the standard fact that such an isotopy inside $\C^2$ can be generated by a global {\bf Hamiltonian isotopy} of the ambient symplectic manifold if and only if the pullbacks of $\lambda_0$ are constant in cohomology; see e.g.~\cite{Weinstein:Berry} by A.~Weinstein. In general, we will call a smooth isotopy of a subset of a symplectic manifold {\bf Hamiltonian} if it can be realised by a Hamiltonian isotopy of the ambient symplectic manifold.

The {\bf symplectic action class} of a Lagrangian inside $\C^2$ is the cohomology class $\sigma_L := [\lambda_0|_{TL}] \in H^1(L;\R)$ pulled back to it, which by Stokes' theorem is the value of the symplectic area of any two-chain inside $\C^2$ with boundary in that class. A torus is {\bf monotone} if the symplectic area of a two-dimensional chain with boundary on it is proportional to the so-called Maslov class of the same chain; see V.~Arnold~\cite{Arnold:Maslov} for the definition of the latter characteristic class. In particular, it follows that the Lagrangian {\bf product tori} $S^1(a) \times S^1(b) \subset \C^2$ are monotone if and only if $a=b.$ Note that the symplectic action class of he standard monotone product tori $S^1(r) \times S^1(r) \subset (\C^2,\omega_0)$ take values that are integer multiples of $\pi r^2>0.$ These tori are usually called {\bf Clifford tori}. 

R.~Vianna~\cite{Vianna:Second} has shown that the classes of monotone Lagrangian tori inside $(\CP^2,\omega_{\OP{FS}})$ exhibit a very rich and interesting structure. In particular, there exists \emph{infinitely} many different Hamiltonian isotopy classes of such tori. The result~\cite[Theorem C]{Dimitroglou:Isotopy} by the author together with E.~Goodman and A.~Ivrii implies that all of Vianna's tori can be placed inside the open unit ball
$$(B^4,\omega_0) = (\CP^2 \setminus \ell_\infty,\omega_{\OP{FS}})$$
after a Hamiltonian isotopy. It thus follows a fortiori that his construction also gives rise to infinitely many different Hamiltonian isotopy classes of monotone Lagrangian tori inside $B^4.$ In contrast to this, the only known Hamiltonian isotopy classes of Lagrangian tori inside the complete Liouville manifold $(\C^2,\omega_0) \supset B^4$ are the product tori, together with linear rescalings of the ``exotic'' monotone torus~\cite{Chekanov:LagrangianTori} constructed by Y.~Chekanov; the latter goes under the name of the {\bf Chekanov torus}, and we refer to the work~\cite{Gadbled:OnExotic} by A.~Gadbled for the presentation that we will use here.

We expect that Vianna's tori all become Hamiltonian isotopic to standard tori inside a ball which is strictly larger than the unit ball. (This can be confirmed by hand for e.g.~certain particular Hamiltonian isotopies that take Vianna's first exotic torus constructed in~\cite{Vianna:First} into the unit ball.)

\begin{rem}
Even though all Lagrangian tori are Lagrangian isotopic inside the ball by~\cite{Dimitroglou:Isotopy}, there is still no classification of Lagrangian tori inside the plane up to \emph{Hamiltonian} isotopy. Under additional assumptions concerning a certain linking behaviour with a conic; the author established a Hamiltonian classification in~\cite{Dimitroglou:Whitney}.
\end{rem}

Our first result is a criterion for when a monotone Lagrangian torus is Hamiltonian isotopic to a Clifford torus in terms of the existence of a large symplectic ball in its complement.
\begin{thm}
  \label{thm:1}
  \begin{enumerate}
    \item 
Let $L \subset (B^4,\omega_0)$ be a Lagrangian torus inside the unit ball whose whose symplectic action class takes the values $\Z \pi r^2$ on $H_1(L)$ for some fixed $r\ge 1/\sqrt{3}.$ There exists a Hamiltonian isotopy inside the ball which takes $L$ to the standard monotone product torus $S^1(r) \times S^1(r)$ if and only if it is disjoint from the interior of some symplectic embedding of $(D^4(\sqrt{2/3}),\omega_0)$ into $(B^4,\omega_0)$.
\item A monotone Lagrangian torus $L \subset (\CP^2,\omega_{\OP{FS}})$ which is disjoint from the interior of some symplectic embedding of $(D^4(\sqrt{2/3}),\omega_0)$ is Hamiltonian isotopic to the standard Clifford torus
  $$S^1(\sqrt{1/3})\times S^1(\sqrt{1/3}) \subset B^4 = (\CP^2 \setminus \ell_\infty,\omega_{\OP{FS}})$$
 contained in the affine chart.
\end{enumerate}
\end{thm}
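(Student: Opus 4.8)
The plan is to derive both statements from a single result about uniruledness and symplectic capacities, together with a suitable packing/displacement argument. First I would recall the standard fact that a large symplectic ball in the complement of a Lagrangian torus forces a constraint: if $(D^4(\sqrt{2/3}),\omega_0)$ embeds symplectically in the complement of $L$ inside $B^4$, then by Gromov's nonsqueezing / the theory of symplectic capacities (in the form used by Gutt--Usher and Hind--Kerman), one controls the minimal symplectic area of a holomorphic disc or sphere passing through $L$. More precisely, a monotone torus $L$ in $(\CP^2,\omega_{\OP{FS}})$ has a well-defined minimal Maslov-$2$ disc area $\sigma_L \le \pi/3$ (with equality for the Clifford torus), and the presence of a ball of capacity $2\pi/3$ in the complement pushes this the other way; I would make this quantitative to conclude $\sigma_L = \pi/3$, i.e.\ $L$ realizes the extremal value. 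One then invokes the classification of monotone tori attaining the maximal displacement energy / maximal monotonicity constant: the work of Cieliebak--Mohnke on the Audin conjecture, refined by the measurements of Lagrangian Floer-theoretic invariants, shows that such an extremal torus must be the Clifford torus up to Hamiltonian isotopy. This gives statement (2).

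For statement (1) I would bootstrap from (2). Given $L \subset B^4$ with action class $\Z\pi r^2$, $r \ge 1/\sqrt3$, disjoint from an embedded $(D^4(\sqrt{2/3}),\omega_0)$, I would first rescale: since $\CP^2 \setminus \ell_\infty = B^4$ and Vianna/ author--Goodman--Ivrii type results let us move a monotone torus of the correct monotonicity constant into standard position, I would arrange $L$ to be the rescaling by $r/\sqrt{1/3} = r\sqrt3$ of a monotone torus $L'$ in the unit ball with action class $\Z\pi/3$. The hypothesis $r \ge 1/\sqrt3$ is exactly what is needed so that the rescaled-down ball $D^4(\sqrt{2/3})$ still has capacity $\ge 2\pi/3$ after dividing areas by $3$, keeping us in the regime of part (2). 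Then part (2), applied after capping off $B^4$ to $\CP^2$, identifies $L'$ with the Clifford torus $S^1(\sqrt{1/3})^2$ by a Hamiltonian isotopy; the subtlety is to keep this isotopy \emph{supported in the ball}, which I would handle by noting that the extra ball in the complement gives room to localize the isotopy away from $\ell_\infty$ (this is where one genuinely needs the quantitative ball rather than just ``some'' ball). Rescaling back up by $r\sqrt3$ produces the Hamiltonian isotopy inside $B^4$ taking $L$ to $S^1(r)\times S^1(r)$. The converse direction of (1) is the easy one: the standard product torus $S^1(r)\times S^1(r)$ with $r \ge 1/\sqrt3$ visibly has a round ball of radius $\sqrt{2/3}$ in its complement inside $B^4$ — concretely one places $D^4(\sqrt{2/3})$ near a ``corner'' of the moment polytope away from the torus — and this property is Hamiltonian-isotopy invariant, so any $L$ Hamiltonian isotopic to it inherits such a ball.

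The main obstacle I anticipate is the rigidity input at the extremal value: showing that a monotone Lagrangian torus in $\CP^2$ with maximal monotonicity constant (equivalently, minimal Maslov-$2$ disc area equal to $\pi/3$, equivalently admitting a complementary ball of capacity $2\pi/3$) must be Hamiltonian isotopic to the Clifford torus. This is not a soft statement; it requires either a Floer-theoretic uniqueness argument (computing the Fukaya-categorical or displacement-energy data of $L$ and matching it to Clifford) or a neck-stretching / SFT compactness argument in the spirit of the author's earlier papers, exploiting the foliation of $\CP^2$ by the holomorphic lines through a point to pin down the Lagrangian. The quantitative bookkeeping — tracking exactly how capacities, areas, and radii scale under $\phi^t_{\lambda_0}$, and verifying that the numerical threshold $\sqrt{2/3}$ (resp.\ $1/\sqrt3$) is sharp for the packing argument to close up — is routine but must be done with care, since the whole theorem lives or dies on these constants. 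A secondary technical point is ensuring all Hamiltonian isotopies in part (1) can be taken compactly supported inside the open ball; I expect this to follow from a cutoff argument using the displacement energy estimate provided by the complementary ball, but it needs to be stated carefully.
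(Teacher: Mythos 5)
Your proposal has a fatal gap at the step you yourself flag as the ``rigidity input.'' For statement (2) you propose to show that the complementary ball forces the minimal Maslov-two disc area of $L$ to equal $\pi/3$ and then to invoke a classification of monotone tori ``attaining the maximal monotonicity constant.'' But for a monotone Lagrangian torus in $(\CP^2,\omega_{\OP{FS}})$ the monotonicity constant is inherited from the ambient monotone manifold: \emph{every} monotone torus in $\CP^2$ has Maslov-two disc area exactly $\pi/3$, so this quantity carries no information and certainly does not single out the Clifford torus --- Vianna's infinitely many pairwise non-isotopic monotone tori all realize it. No such classification exists, and the Cieliebak--Mohnke work you cite gives an upper bound of $2\pi/3$ on the Gromov width of the complement of any monotone torus, not a uniqueness statement at that value. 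The complementary ball has to enter the holomorphic-curve analysis directly: in the paper it does so via Sikorav's monotonicity lemma for area, applied with an almost complex structure kept standard on the embedded ball during a neck-stretch along $L$, which forces the component of the limit of lines passing through a point of the ball to have area at least $2\pi/3$ and thereby pins down the combinatorics of the broken line; for (2) one then builds a null-homology of $L$ from a compact moduli space of index-three planes and derives a contradiction with a linking circle unless $L\subset S^3(\sqrt{2/3})$.

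Your treatment of statement (1) also fails at precisely the point where the theorem has content. Reducing to (2) and then ``localizing the isotopy away from $\ell_\infty$'' presumes that Hamiltonian isotopy in $\CP^2$ (or $\C^2$) can be improved to one confined to $B^4$; Theorem \ref{thm:2} of the paper exhibits a torus for which exactly this fails, so no soft cutoff argument can work. The paper's route is different: the neck-stretching limit is glued to an embedded symplectic sphere in the line class meeting $L$ cleanly in a circle of Maslov index four, two further pseudoholomorphic lines are produced so that $L$ becomes homologically essential in the complement of three lines, and one concludes with the refined nearby-Lagrangian classification in $T^*\T^2$ from the author's earlier work --- all of which takes place inside the ball. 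Finally, your rescaling bookkeeping is reversed: shrinking $L$ by the Liouville flow to make it monotone of action $\Z\pi/3$ shrinks the complementary ball to radius $\sqrt{2/3}/(r\sqrt{3})<\sqrt{2/3}$ for $r>1/\sqrt{3}$, so the hypothesis of (2) is lost rather than preserved. (The easy ``only if'' direction of (1) is correct as you state it.)
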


We then show that Part (1) of the above theorem is sharp in the following sense: Even under the stronger assumption that $L \subset B^4 \setminus B^4(\sqrt{2/3}-\epsilon)$ is a Lagrangian torus that is Hamiltonian isotopic to $S^1(r) \times S^1(r)$ inside the full plane $(\C^2,\omega_0),$ there are cases when any such Hamiltonian isotopy must intersect $S^3=\partial D^4$ at some moment in time. (In other words, the Hamiltonian isotopy cannot be confined to the unit ball that contains the original Lagrangian.) More precisely, we establish that
\begin{thm}
\label{thm:2}
There exists a Lagrangian torus $L \subset (B^4,\omega_0)$ which is Hamiltonian isotopic inside $(\C^2,\omega_0)$ to the standard product torus
$$S^1(1/\sqrt{3}) \times S^1(1/\sqrt{3}),$$
but where every such Hamiltonian isotopy necessarily satisfies $\phi^{t_0}_{H_t}(L) \cap S^3 \neq \emptyset$ for at least one value $t_0 \in[0,1].$ In addition, we may assume that one of the following holds:
\begin{itemize}
\item $L \subset B^4 \setminus B^4(\sqrt{2/3}-\epsilon),$ whenever $\epsilon>0$ is sufficiently small, or
\item $L \subset B^4(\sqrt{1-r}) \subset B^4,$ whenever $r \in (0,1/6).$
\end{itemize}
\end{thm}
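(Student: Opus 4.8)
The strategy is to build the Lagrangian torus $L$ by taking a Clifford torus $S^1(1/\sqrt 3)\times S^1(1/\sqrt 3)$, which in $(\C^2,\omega_0)$ bounds a Lagrangian-isotopy-invariant action class, and pushing it by an explicit Hamiltonian isotopy of the \emph{plane} into a thin shell near the sphere $S^3$ (or, alternatively, into a small concentric ball $B^4(\sqrt{1-r})$), while arranging that the push requires ``passing through'' the region enclosed by $S^3$. Since all product tori of equal radii are Hamiltonian isotopic inside $\C^2$ — indeed by a linear symplectomorphism one can rotate and rescale, and then translate — producing a Lagrangian $L\subset B^4$ that is globally Hamiltonian isotopic to the standard monotone torus is not itself the issue; the content is the \emph{obstruction} to confining such an isotopy to the unit ball. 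The key dichotomy to exploit is Theorem~\ref{thm:1}(1): if an isotopy \emph{could} be confined to $B^4$, then (taking $r=1/\sqrt3$, so the hypothesis $r\ge 1/\sqrt3$ is met with equality) $L$ would be Hamiltonian isotopic \emph{inside the ball} to the standard torus, hence by Part (1) it would have to be disjoint from the interior of some symplectically embedded $D^4(\sqrt{2/3})\subset B^4$. So it suffices to construct $L$ inside the stated thin region with the property that its complement in $B^4$ admits \emph{no} symplectic embedding of $D^4(\sqrt{2/3})$.

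**Carrying it out.** First I would fix the model: start with a standard monotone product torus $T_0=S^1(1/\sqrt3)\times S^1(1/\sqrt3)$, which sits inside $B^4(\sqrt{2/3})$ since its points have $|\mathbf z|^2 = 1/3+1/3 = 2/3$. Now apply an explicit compactly supported Hamiltonian isotopy $\psi$ of $(\C^2,\omega_0)$ that drags this torus radially outward into the shell $B^4\setminus B^4(\sqrt{2/3}-\epsilon)$ — e.g.\ compose a radial ``inflation/deflation'' Hamiltonian in each factor $\C$ with a suitable rotation so that the resulting torus $L=\psi(T_0)$ is genuinely linked with the core $B^4(\sqrt{2/3}-\epsilon)$ in the sense that $B^4\setminus L$ contains no embedded symplectic ball of capacity $\pi\cdot 2/3$. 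For the second bullet, instead I would use the linear symplectomorphism that rescales $\C^2$ by the factor making $T_0$ fit inside $B^4(\sqrt{1-r})$ (which is possible precisely when $\sqrt{2/3}<\sqrt{1-r}$, i.e.\ $r<1/3$; the sharper restriction $r<1/6$ presumably comes from leaving enough room so that $B^4(\sqrt{1-r})\setminus L$ still cannot contain $D^4(\sqrt{2/3})$, forcing any confined isotopy out of the ball), then further Hamiltonian-isotope $L$ inside that smaller ball to ``use up'' the available room. In both cases the non-squeezing/packing-type input — that $B^4\setminus L$ (resp. $B^4(\sqrt{1-r})\setminus L$) contains no symplectic $D^4(\sqrt{2/3})$ — is verified either by a Gromov-width/capacity computation (the complement of a suitably placed monotone Lagrangian torus has bounded Gromov width, cf.\ the Gutt–Usher circle of ideas and the explicit area bookkeeping already set up in the paper) or by appealing to the characterisation in the complementary direction of Theorem~\ref{thm:1}(1) itself.

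**The main obstacle.** The genuinely hard step is constructing the torus $L$ \emph{explicitly enough} that one can certify the non-existence of the large embedded ball in its complement, and do so for \emph{every} placement in the thin shell $B^4\setminus B^4(\sqrt{2/3}-\epsilon)$ — since a Lagrangian constrained to such a thin shell is geometrically very close to $S^3$, one must show its complement-in-the-ball is ``pinched'' enough to obstruct $D^4(\sqrt{2/3})$. I would handle this by a neck-stretching / symplectic-field-theory argument along $S^3$, or more concretely by using the Vianna-type or Chekanov-type tori already present in the paper's toolkit, for which the relevant Gromov width of the complement is computed via counts of holomorphic disks; the contrapositive of Theorem~\ref{thm:1}(1) then immediately yields that no ball-confined isotopy to the standard torus exists. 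The remaining steps — writing down the global Hamiltonian isotopy in $\C^2$ and checking $L\subset B^4$ — are routine explicit constructions with radial Hamiltonians and rescalings, and the $\phi^{t_0}_{H_t}(L)\cap S^3\ne\emptyset$ conclusion is then a formal consequence: any Hamiltonian path from $L$ to the standard torus that stayed inside $\overline{B^4}$ would, by compactness and Part (1), place a $D^4(\sqrt{2/3})$ in $B^4\setminus L$, a contradiction.
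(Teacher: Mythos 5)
The overall logical skeleton you set up --- build $L$ so that $B^4\setminus L$ admits no symplectic embedding of the closed ball $D^4(\sqrt{2/3})$, then use the easy direction of Theorem~\ref{thm:1}(1) to rule out a ball-confined isotopy --- is valid, but the load-bearing step is exactly the one you do not carry out, and neither of the two methods you propose for it can work. Appealing to ``the complementary direction of Theorem~\ref{thm:1}(1)'' is circular: to extract ``no ball embeds'' from that equivalence you would need ``no confined isotopy exists,'' which is the statement being proved. A Gromov-width or capacity computation cannot certify it either: the paper's own Proposition shows that the complement of the monotone Chekanov torus contains open symplectic balls of \emph{every} radius $<\sqrt{2/3}$ (Gromov width $2\pi/3$ in $\CP^2$), so the obstruction lives entirely in the distinction between open and closed balls at the critical radius, which no capacity sees. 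In the paper, the non-existence of a closed $D^4(\sqrt{2/3})$ in the complement of $L$ is a \emph{corollary} of Theorem~\ref{thm:2} (via Theorem~\ref{thm:1}(1), giving Corollary~\ref{cor:2}), not an input to it. There is also a problem with your explicit construction: a radial Hamiltonian drag of the Clifford torus that stays inside $B^4$ produces a torus that is tautologically Hamiltonian isotopic to the product torus \emph{inside the ball}, hence has a standard $D^4(\sqrt{2/3})$ in its complement after pulling back; nothing exotic can come out of it. (The side claims that equal-area product tori of different radii are related by ``rotate and rescale'' symplectomorphisms, and that the second bullet needs a rescaling, are also off --- rescaling is only conformally symplectic and changes the action class --- though these are not load-bearing.)

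The paper's actual mechanism is different and is the missing idea: $L$ is constructed inside a McDuff probe $P_2=\mu^{-1}\{u_1=\pi-2u_2,\,u_1>0\}$, foliated by symplectic discs, by sliding a simple closed curve $\gamma_t$ of fixed area $\pi/3$ across the origin of the disc factor; the endpoint $L=L_1$ is identified (after Gadbled) with the monotone \emph{Chekanov} torus in the compactification $\CP^2=\overline{B^4}$. Any Hamiltonian isotopy of $L$ to the product torus that avoided $S^3$ would be confined to $B^4$ and hence extend to $\CP^2$, contradicting the Chekanov--Schlenk theorem that the monotone Chekanov and Clifford tori are not Hamiltonian isotopic in $\CP^2$. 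The holomorphic-curve input is thus the superpotential count of Maslov-two discs through $\ell_\infty$ in $\CP^2$, not a ball-packing obstruction inside $B^4$; the placement of $L$ in the thin shell $B^4\setminus B^4(\sqrt{2/3}-\epsilon)$ or in $B^4(\sqrt{1-r})$ for $r<1/6$ is then read off directly from the moment-map picture of the probe.
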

The torus $L$ is constructed in Section~\ref{sec:ProofThm2} by explicit means involving a probe, which is a tool that was invented in \cite{McDuff:Probes} by D.~McDuff; see Figures~\ref{fig:probe1} and \ref{fig:probe2} for its depiction. These examples are thus of a more elementary kind than the tori constructed by Vianna (which are rather cumbersome to describe explicitly inside the ball). In fact, the example that we consider can be identified with the monotone Chekanov torus inside $\CP^2,$ but where the embedding of $B^4 \hookrightarrow \CP^2$ that contains the torus is obtained by removing a line in the complement of the torus which is different from the ``standard line at infinity''. In order to distinguish $L$ from a product torus inside the \emph{unit ball} it suffices to compactify the ball to $\CP^2=\overline{B^4},$ and then to use the classical result by Y.~Chekanov and F.~Schlenk~\cite{Chekanov:TwistTori} that the monotone Chekanov torus is not Hamiltonian isotopic to a product torus inside $\CP^2.$ See Section \ref{sec:holomorphic} for a discussion about how the holomorphic curves distinguish $L$ from the product torus in this case.

Additionally, in conjunction with Theorem \ref{thm:1}, we can conclude that $L$ is exotic also in the following sense which (at least a priori) is stronger:
\begin{cor}
\label{cor:2}
The Lagrangian torus $L$ in Theorem \ref{thm:2} not in the image of $S^1(1/\sqrt{3}) \times S^1(1/\sqrt{3})$ under any symplectomorphism $(B^4,\omega_0) \xrightarrow{\cong} (B^4,\omega_0).$
\end{cor}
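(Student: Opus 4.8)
The plan is to argue by contradiction, exploiting the fact that Theorem~\ref{thm:1}(1) characterises the property ``Hamiltonian isotopic, through Lagrangians in $B^4$, to the product torus $S^1(1/\sqrt{3})\times S^1(1/\sqrt{3})$'' by a condition that is manifestly invariant under the group of symplectomorphisms of $(B^4,\omega_0)$ — namely, admitting a symplectically embedded copy of $D^4(\sqrt{2/3})$ in its complement inside the ball. So suppose, for contradiction, that there is a symplectomorphism $\psi\colon (B^4,\omega_0)\xrightarrow{\cong}(B^4,\omega_0)$ with $\psi\bigl(S^1(1/\sqrt{3})\times S^1(1/\sqrt{3})\bigr)=L$, where $L$ is the torus produced by Theorem~\ref{thm:2}. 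Note that no Hamiltonicity of $\psi$ is assumed; the argument will use only that $\psi$ preserves $\omega_0$ and maps $B^4$ onto $B^4$, which is precisely why the conclusion is (a priori) stronger than Theorem~\ref{thm:2}.

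First I would exhibit a symplectically embedded $D^4(\sqrt{2/3})$ disjoint from the interior of which the Clifford torus lies: since the points of $S^1(1/\sqrt{3})\times S^1(1/\sqrt{3})$ satisfy $|z_1|^2+|z_2|^2 = \tfrac13+\tfrac13 = \tfrac23$, this torus lies on the sphere $S^3(\sqrt{2/3})$ and hence is disjoint from the open ball $B^4(\sqrt{2/3})$; moreover $\sqrt{2/3}<1$, so the standard inclusion $\iota\colon (D^4(\sqrt{2/3}),\omega_0)\hookrightarrow (B^4,\omega_0)$ is a symplectic embedding with interior disjoint from the Clifford torus. (One could instead invoke the ``only if'' direction of Theorem~\ref{thm:1}(1) applied to the Clifford torus and the identity isotopy.) Composing, $\psi\circ\iota\colon (D^4(\sqrt{2/3}),\omega_0)\hookrightarrow (B^4,\omega_0)$ is again a symplectic embedding into the ball, and its interior $\psi\bigl(B^4(\sqrt{2/3})\bigr)$ is disjoint from $\psi\bigl(S^1(1/\sqrt{3})\times S^1(1/\sqrt{3})\bigr)=L$.

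Next I would check that $L$ satisfies the hypotheses of Theorem~\ref{thm:1}(1) with $r=1/\sqrt{3}\ge 1/\sqrt{3}$: by Theorem~\ref{thm:2} the torus $L$ is Hamiltonian isotopic to $S^1(1/\sqrt{3})\times S^1(1/\sqrt{3})$ inside all of $(\C^2,\omega_0)$, and a Hamiltonian isotopy keeps the symplectic action class constant in cohomology, so $\sigma_L$ takes the values $\tfrac{\pi}{3}\Z=\pi (1/\sqrt{3})^2\,\Z$ on $H_1(L)$. Feeding $L$ and the embedding $\psi\circ\iota$ into the ``if'' direction of Theorem~\ref{thm:1}(1) then produces a Hamiltonian isotopy \emph{inside the ball} $B^4$ carrying $L$ to $S^1(1/\sqrt{3})\times S^1(1/\sqrt{3})$. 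After cutting off the generating Hamiltonian, this is in particular a Hamiltonian isotopy of $(\C^2,\omega_0)$ taking $L$ to the product torus, all of whose time slices are contained in the open ball $B^4$ and hence are disjoint from $S^3=\partial D^4$. This directly contradicts the conclusion of Theorem~\ref{thm:2}, so no such $\psi$ can exist, which is the assertion of the corollary.

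The point worth stressing is that there is no substantive new obstacle here: all of the symplectic-topological content — the probe construction of $L$, the holomorphic-curve input distinguishing it from the product torus, and the $\mathrm{Symp}(B^4)$-invariant reformulation — is already packaged in Theorems~\ref{thm:1} and~\ref{thm:2}. The only things requiring care are the bookkeeping items just used: that the action-class hypothesis is genuinely met, with the \emph{same} value $r=1/\sqrt{3}$, for both the Clifford torus and for $L$; that the complementary embedded ball can be transported along an arbitrary (not necessarily Hamiltonian) symplectomorphism of $B^4$; and that ``inside the ball'' in Theorem~\ref{thm:1} forces every time slice to avoid $S^3$, which is exactly what Theorem~\ref{thm:2} forbids.
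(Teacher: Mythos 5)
Your argument is correct and is essentially the one the paper intends: transport the standard closed ball $D^4(\sqrt{2/3})$ complementary to the Clifford torus by the hypothetical symplectomorphism, feed the resulting embedding into the ``if'' direction of Theorem~\ref{thm:1}(1) to get a Hamiltonian isotopy from $L$ to the product torus confined to $B^4$, and contradict Theorem~\ref{thm:2}. The bookkeeping points you flag (invariance of the action class, transporting the ball by a non-Hamiltonian symplectomorphism, cutting off the Hamiltonian near the compact trace of the isotopy) are exactly the right ones and are handled correctly.
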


\subsection{Gromov width of the complement of Lagrangians} 
The Gromov width is a well-studied symplectic capacity that was introduced by M.~Gromov in~\cite{Gromov:Pseudo}, which for a symplectic manifold $(X^4,\omega)$ is equal to the supremum
$$ \sup \left\{ \pi\cdot r^2; \:\: \exists \varphi \colon (B^4(r),\omega_0) \hookrightarrow (X,\omega)\right\} $$
taken over all symplectic embeddings of open balls.

There are previous computations of the Gromov width of the complement of certain Lagrangian submanifolds; we refer to~\cite{Biran:LagrangianBarriers} by P.~Biran as well as~\cite[Section 6.2]{QuantumHomology} by P.~Biran and O.~Cornea, who showed that the Gromov width of the complement of the monotone Clifford torus inside $\CP^2$ is equal to $\pi 2/3$. In \cite[Theorem 1.6]{Cieliebak:PuncturedHolomorphic} K.~Cieliebak and K.~Mohnke give a strong bound for the Gromov width of the complement of arbitrary Lagrangian tori in $\CP^n$; their bound in particular implies that the complement of an arbitrary monotone torus $L \subset \CP^2$ has Gromov bounded from above by $\pi 2/3$.

In the more recent work \cite{Lee:Asymptotic} by W. Lee, Y.-G. Oh, and R. Vianna it is shown that all of Vianna's tori have Gromov width at least equal to $\pi/3$ with our convention for $\omega_{\OP{FS}}$. (Warning: in the latter paper the Fubini--Study form on $\CP^2$ is rescaled by a multiple of two relative our convention.)
\begin{rem}Part (2) of Theorem \ref{thm:1} gives a partial answer to \cite[Conjecture 4.2]{Lee:Asymptotic}, since it shows that the Clifford torus is the unique monotone torus inside $(\CP^2,\omega_{\OP{FS}})$ that admits an open symplectic ball $(B^4(\sqrt{2/3}),\omega_0)$ in its complement, albeit under the additional assumption that the embedding has a symplectic extension to the closed ball.

The latter condition could be removed, if it is true that any monotone Lagrangian torus contained inside the closure of such an open symplectic ball is Hamiltonian isotopic to a product torus (which is the case whenever there is a symplectic extension of the embedding to the closed ball $D^4(\sqrt{2/3})$).
\end{rem}

On the negative side, we note that the Gromov width of the complement fails to distinguish at least some of the monotone tori in $\CP^2$:
\begin{prop}
The monotone Clifford torus, Chekanov torus, as well as Vianna's first torus from \cite{Vianna:First} inside $\CP^2$ (i.e.~$T(1,1,1)$, $T(1,1,4)$ and $T(1,4,25)$ using Vianna's notation) all have complements with Gromov width $\pi 2/3$.
\end{prop}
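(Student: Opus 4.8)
The plan is to bound the Gromov width of the complement of each of the three tori both above and below by $\pi 2/3$. For the upper bound there is nothing new to do: as recalled above, \cite[Theorem~1.6]{Cieliebak:PuncturedHolomorphic} of K.~Cieliebak and K.~Mohnke shows that the complement of \emph{any} monotone Lagrangian torus in $(\CP^2,\omega_{\OP{FS}})$ has Gromov width at most $\pi 2/3$, and for the Clifford torus this already appears in the work of P.~Biran and O.~Cornea~\cite{Biran:LagrangianBarriers,QuantumHomology}. So the substance of the proposition consists of three lower bounds; since the Gromov width is a supremum of capacities of embedded \emph{open} balls, it suffices in each case to produce symplectic embeddings of open balls of capacity tending to $\pi 2/3$ into the complement. (By Part~(2) of Theorem~\ref{thm:1} no closed ball $D^4(\sqrt{2/3})$ embeds into the complement of the exotic tori $T(1,1,4)$ or $T(1,4,25)$, so there one cannot hope to realise the supremum by a single embedding.)

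For the Clifford torus $T(1,1,1)$ the lower bound is the classical computation of Biran and Biran--Cornea, which can also be read off the toric picture: realise $(\CP^2,\omega_{\OP{FS}})$ as the toric manifold with moment polytope $\Delta=\{(x,y):x\ge 0,\ y\ge 0,\ x+y\le\pi\}$, so that a line has symplectic area $\pi$ and $T(1,1,1)$ is the Lagrangian fibre over the barycentre $(\pi/3,\pi/3)$. The preimage of the open corner sub-simplex $\{x>0,\ y>0,\ x+y<s\}$ based at the vertex $(0,0)$ is then a symplectically embedded open ball of capacity $s$, and it is disjoint from the fibre over $(\pi/3,\pi/3)$ as soon as $s\le 2\pi/3$; letting $s\nearrow 2\pi/3$ gives the bound. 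For the Chekanov torus $T(1,1,4)$ the lower bound is handed to us by Theorem~\ref{thm:2}: the torus $L$ produced in Section~\ref{sec:ProofThm2} represents the Hamiltonian isotopy class $T(1,1,4)$ inside $\CP^2=\overline{B^4}$ (see the discussion following Theorem~\ref{thm:2}), and the first alternative there places it inside $B^4\setminus B^4(\sqrt{2/3}-\epsilon)$, so that $\CP^2\setminus L$ contains the concentric open ball $B^4(\sqrt{2/3}-\epsilon)$, of capacity $\pi(2/3-\epsilon)$. As the Gromov width of the complement depends only on the Hamiltonian isotopy class, this bound holds for the monotone Chekanov torus and every small $\epsilon>0$.

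For Vianna's first torus $T(1,4,25)$ the plan is to imitate the toric argument inside an almost toric fibration: one takes the almost toric base diagram $\Delta'$ --- a triangle carrying a node (or nodes) in its interior --- in which $T(1,4,25)$ appears as the monotone fibre~\cite{Vianna:First}, and slides the node(s), together with their branch cuts, into an arbitrarily small neighbourhood of one edge of $\Delta'$ without ever crossing the monotone point; the integral affine structure is then standard on a corner sub-simplex at the opposite vertex of affine size arbitrarily close to $2\pi/3$ and disjoint from the monotone fibre, whose preimage is the required ball of capacity approaching $\pi 2/3$ in $\CP^2\setminus T(1,4,25)$. The main obstacle is precisely this last step: one must check that the node(s) can in fact be slid clear of a \emph{full} size-$2\pi/3$ corner without sliding any of them across the monotone fibre (which would change it), an explicit finite computation in the affine geometry of the polytope. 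This estimate degrades for Vianna's later tori --- consistently with the uniform lower bound $\pi/3$ of W.~Lee, Y.-G.~Oh and R.~Vianna~\cite{Lee:Asymptotic} --- and running the same computation for $T(1,1,4)$ reproduces the Chekanov bound obtained above, which is a useful consistency check. Alternatively, for $T(1,4,25)$ one may avoid almost toric technology altogether and, mirroring the probe construction of Section~\ref{sec:ProofThm2}, exhibit an explicit embedding of $B^4(\sqrt{2/3}-\epsilon)$ together with a Hamiltonian isotopy carrying a monotone representative of $T(1,4,25)$ off its image.
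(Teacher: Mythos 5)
Your proposal is correct and its skeleton coincides with the paper's: the upper bound is exactly the Cieliebak--Mohnke bound \cite{Cieliebak:PuncturedHolomorphic}, and the substance is three lower bounds realised by sequences of open balls of capacity tending to $2\pi/3$. The differences are in the details. For the Chekanov torus you import the ball from Theorem~\ref{thm:2} (the placement $L\subset B^4\setminus B^4(\sqrt{2/3}-\epsilon)$ together with the identification of $L$ with the monotone Chekanov torus in $\overline{B^4}=\CP^2$), whereas the paper reads the same bound off Figure~\ref{fig:Chekanov}: the Chekanov torus is placed over the diagonal in the region $\{u_1/\pi<1/3+\epsilon\}$ and Traynor's packings \cite{Traynor:Packing} supply balls $B^4(\sqrt{2/3-\epsilon})$ in a neighbourhood of the complementary region. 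Both work; note only the slip that the capacity of $B^4(\sqrt{2/3}-\epsilon)$ is $\pi(\sqrt{2/3}-\epsilon)^2$ rather than $\pi(2/3-\epsilon)$, which is harmless since it still tends to $2\pi/3$. For $T(1,4,25)$ your nodal-slide plan is the almost-toric translation of what the paper does, and the ``explicit finite computation'' you flag as the main obstacle is precisely the point the paper settles with one observation: $T(1,4,25)$ is obtained from $T(1,1,4)$ by mutation along one of Vianna's two embedded Lagrangian discs \cite{Vianna:First}, and that disc --- equivalently, the branch cut along which the node is slid --- already lies in the complement of the balls constructed for the Chekanov torus, so the mutation can be performed without disturbing them. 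With that input your argument closes; without it, the $T(1,4,25)$ case remains a sketch, as you yourself acknowledge.
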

\begin{proof}
  For the Clifford torus we can take the obvious symplectic ball centered at the origin. For the Chekanov torus we can find symplectic balls of radius $s$ for any $s < \sqrt{2/3}$; see Figure \ref{fig:Chekanov}. For the torus $T(1,4,25)$ we leave it to the reader to check that the construction of $T(1,4,25)$ from \cite{Vianna:First}, i.e.~the mutation of $T(1,1,4)$ along a suitable embedded Lagrangian disc, can be performed in the complement of these balls. The key point is that one of the two Lagrangian discs considered by Vianna itself lives in the complement of these balls.
\end{proof}

\begin{figure}[htp]
\centering
\vspace{3mm}
\labellist
\pinlabel $1$ at 100 -8
\pinlabel $u_1/\pi$ at 124 3
\pinlabel $u_2/\pi$ at 4 118
\pinlabel $r^2+\epsilon$ at 57 -6
\pinlabel $1/3$ at -11 35
\pinlabel $1/2$ at -11 51
\pinlabel $1$ at -5 100
\pinlabel $1/3$ at 28 -8
\pinlabel $B$ at 60 25
\pinlabel $\color{red}L_{\OP{Cl}}$ at 30 45
\pinlabel $\ell_\infty$ at 30 87
\pinlabel $\color{blue}L_{\OP{Ch}}$ at 34 20
\endlabellist
\includegraphics{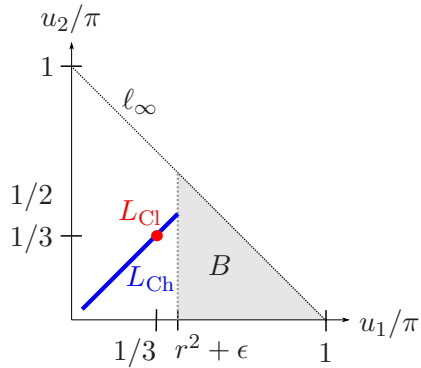}
\vspace{3mm}
\caption{For any $\epsilon>0$, the version of the Chekanov torus $L_{\OP{Ch}}$ inside $B^4$ whose symplectic action assumes the values $\Z\pi r^2$ with $r \in [1/\sqrt{3},1)$ can be placed over the diagonal inside the region $\{u_1/\pi < r^2+\epsilon\}$. (When $r=1/3$ this is the monotone Chekanov torus inside $\CP^2$.) For any neighbourhood of $B=\{u_2/\pi \ge r^2+\epsilon\}$ there exists a symplectic embedding of $B^4(\sqrt{1-r^2-\epsilon})$ which moreover can be taken to be disjoint from $\ell_\infty$; see L.~Traynor's work \cite{Traynor:Packing}.}
\label{fig:Chekanov}
\end{figure}

It is interesting that for tori in $B^4$, the Gromov can in fact be used to distinguish certain ``large'' tori which are monotone \emph{inside the ball}.
\begin{prop}
  \begin{enumerate}
  \item The Gromov width of $B^4 \setminus S^1(r) \times S^1(r)$ is equal to $\pi2r^2$ whenever $r \in [\sqrt{1/3},1)$.
  \item If a Lagrangian torus $L \subset B^4$ has symplectic action which assumes the values $\Z \pi r^2$ for some fixed $r \in [1/\sqrt{3},1)$ and satisfies the property that the Gromov width of $B^4 \setminus L$ is strictly greater than $\pi 2/3$, then it is Hamiltonian isotopic to a product torus.
  \end{enumerate}
\end{prop}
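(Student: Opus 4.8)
\emph{Part (1).} I would split the equality into the two obvious inequalities. For the lower bound, note that the open ball $B^4(\sqrt 2\,r)=\{|z_1|^2+|z_2|^2<2r^2\}$ has capacity $\pi(\sqrt 2\,r)^2=\pi 2r^2$, is contained in $B^4=B^4_0(1)$ as soon as $r\le 1/\sqrt 2$, and is disjoint from $S^1(r)\times S^1(r)$ because that torus lies on the sphere $S^3(\sqrt 2\,r)=\partial B^4(\sqrt 2\,r)$; the inclusion $B^4(\sqrt 2\,r)\hookrightarrow B^4\setminus(S^1(r)\times S^1(r))$ therefore shows that the Gromov width is at least $\pi 2r^2$. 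For the upper bound I would enlarge the ambient ball: since $r\ge 1/\sqrt 3$ we have $B^4=B^4_0(1)\subseteq B^4_0(\sqrt 3\,r)$, and compactifying $B^4(\sqrt 3\,r)$ by collapsing its boundary sphere produces a copy of $\CP^2$ whose line class has symplectic area $3\pi r^2$. With respect to this Fubini--Study form the torus $S^1(r)\times S^1(r)$ sitting in the affine chart is precisely the monotone Clifford torus, since its minimal disc area $\pi r^2$ is one third of the line area. Hence, by the computation of Biran and Cornea in \cite[Section 6.2]{QuantumHomology} (or, for the upper bound alone, Cieliebak--Mohnke \cite[Theorem 1.6]{Cieliebak:PuncturedHolomorphic}), rescaled by the factor $3r^2$, the Gromov width of the complement of this Clifford torus in $\CP^2$ equals $\tfrac23\cdot 3\pi r^2=\pi 2r^2$. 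As $B^4\setminus(S^1(r)\times S^1(r))$ embeds into $\CP^2\setminus(S^1(r)\times S^1(r))$, monotonicity of the Gromov width under inclusions gives the matching upper bound, and equality follows. (For $r\in(1/\sqrt 2,1)$ the torus $S^1(r)\times S^1(r)$ is disjoint from the open ball $B^4$ altogether, so the assertion is substantive only for $r\in[1/\sqrt 3,1/\sqrt 2]$.)

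\emph{Part (2).} This I would deduce directly from Theorem~\ref{thm:1}(1). If the Gromov width of $B^4\setminus L$ exceeds $\pi 2/3=\pi(\sqrt{2/3})^2$, then by definition there is a symplectic embedding $\psi\colon(B^4(s),\omega_0)\hookrightarrow B^4\setminus L$ with $s>\sqrt{2/3}$. Restricting $\psi$ to the closed ball $D^4(\sqrt{2/3})\subset B^4(s)$ yields a symplectic embedding of $(D^4(\sqrt{2/3}),\omega_0)$ into $(B^4,\omega_0)$ whose interior $\psi(B^4(\sqrt{2/3}))$ is disjoint from $L$. Since the symplectic action class of $L$ takes the values $\Z\pi r^2$ with $r\ge 1/\sqrt 3$, Theorem~\ref{thm:1}(1) applies and shows that $L$ is Hamiltonian isotopic inside the ball to $S^1(r)\times S^1(r)$, hence to a product torus.

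\emph{Where the difficulty lies.} Neither part demands a genuinely new argument. Part (2) is a formal consequence of Theorem~\ref{thm:1}, the only (trivial) point being that an embedding of the open ball $B^4(s)$ restricts to an embedding of the closed ball of any strictly smaller radius. The single place where real input enters is the upper bound in Part (1), which rests on the Biran--Cornea / Cieliebak--Mohnke computation of the Gromov width of the complement of a monotone torus in $\CP^2$; proving that bound from scratch, by a neck-stretching argument applied to a ball in the complement of a non-monotone product torus, would be the hard part, but we sidestep it via the observation that $S^1(r)\times S^1(r)$ becomes the monotone Clifford torus once the unit ball is enlarged to $B^4(\sqrt 3\,r)$ and compactified.
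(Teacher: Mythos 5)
Your proof is correct and follows essentially the same route as the paper: the lower bound comes from the round ball centred at the origin, the upper bound from the known computation of the Gromov width of the complement of a monotone torus in $\CP^2$ (the paper cites Cieliebak--Mohnke directly, whereas you first enlarge the ambient ball to $B^4(\sqrt{3}\,r)$ and compactify so that $S^1(r)\times S^1(r)$ becomes the monotone Clifford torus --- a clean way of reducing to the Biran--Cornea computation), and Part (2) is the same immediate application of Theorem~\ref{thm:1}(1) after restricting the open-ball embedding to a closed sub-ball. Your parenthetical remark about the range of $r$ is well taken: for $r>1/\sqrt{2}$ the torus misses the open unit ball entirely, so the assertion is only meaningful for $r\le 1/\sqrt{2}$.
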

\begin{proof}
  (1): Considering the round balls centered at the origin we conclude that the Gromov width is bounded from below by $\pi 2r^2$. On the other hand, it is also bounded from above by the same number by \cite[Theorem 1.6]{Cieliebak:PuncturedHolomorphic}. (Alternatively one could argue along the lines of \cite[Corollary 6.5]{QuantumHomology}.)
  
  (2): This is a direct consequence of Part (1) of Theorem \ref{thm:1}.
\end{proof}
\begin{exm} The Chekanov torus inside $(B^4,\omega_0)$ whose symplectic action class assumes the values $\Z \pi r^2$ admits balls in its complement of radius $\sqrt{s}$ for any $s<2/3-r$; see Figure \ref{fig:Chekanov}. Its complement thus has Gromov width equal to some value $w \in [\pi (2/3-r),\pi 2/3]$. Even though we cannot give a more precise computation of this Gromov width, it is still strictly less than the Gromov width of the complement of the product torus of the same symplectic action.
\end{exm}

\subsection{Application to knotted symplectic embeddings}

A typical symplectic embedding problem concerns the question whether there exists an embedding
$$(Y^{2n},d\lambda_Y) \hookrightarrow (X^{2n},Cd\lambda_X)$$
of a symplectic manifold into e.g.~an open Liouville domain $(X,C\lambda_X)$ for some $C> 0.$ Here we assume that $X$ is the interior of a compact Liouville domain with smooth boundary,  while $Y$ is compact subset of a symplectic domain with a sufficiently well-behaved boundary. Typically one is interested in the case when an obvious, or even canonical, such embedding exists for all $C \gg 0$ sufficiently large. The natural question is then: how small can $C>0$ be taken for a symplectic embedding to exist?

The first nontrivial result about symplectic embeddings was Gromov's famous non-squeezing result~\cite{Gromov:Pseudo}, which showed that there are interesting symplectic obstructions beyond the obvious volume obstruction. Since then symplectic embedding problems have received a great amount of attention. Particularly in dimension four, the situation is rather well understood for some special cases of $Y$ and $X$. One notable such instance is the seminal work~\cite{McDuff:HoferConjecture} by D.~McDuff, which answers the question when an ellipsoid can be embedded into a ball.

Many of the natural examples of domains of the form $Y \subset (\C^n,\omega_0)$ that have been studied in the literature have the feature that $\partial Y$ is foliated by (possibly degenerate) Lagrangian standard product tori. Most attention has been given to domains for which the standard Liouville vector field $\zeta_0$ moreover is transverse to $\partial Y.$ Such domains include closed balls $D^n(r),$ closed ellipsoids $E(a,b) := \{ \pi\|z_1\|^2/a+\pi\|z_2\|^2/b \le 1\},$ as well as polydiscs $D^2(a) \times D^2(b)$ (the latter has a smooth boundary with corner equal to a Lagrangian product torus). Domains of this type are typically depicted by their image under the standard momentum map
\begin{gather*}
\mu \colon \C^2 \to \R^2,\\
(z_1,z_2) \mapsto (u_1,u_2)=\pi(\|z_1\|^2,\|z_2\|^2).
\end{gather*}
In this manner we obtain a direct connection between symplectic embedding problems and embedding problems for families of Lagrangian tori. This direction was taken in the work~\cite{Hind:SymplecticEmbeddings} by R.~Hind and S.~Lisi, \cite{Cieliebak:PuncturedHolomorphic} by K.~Cieliebak and K.~Mohnke, \cite{Gutt:SymplecticCapacities} by J.~Gutt and M.~Hutchings, and \cite{Hind:Squeezing} by R.~Hind and E.~Opshtein.

In the case when there exists a symplectic embedding $(Y^{2n},d\lambda_Y) \hookrightarrow (X^{2n},d\lambda_X)$ one can further ask the question whether two different such embeddings have images that can be made to coincide after a symplectomorphism of the ambient space $(X,d\lambda_X).$ It was shown by J.~Gutt and M.~Usher \cite{Gutt:SymplecticallyKnotted} that this is not necessarily the case, even if such a symplectomorphism exists for the completion of $(X,d\lambda_X)$ to a Liouville manifold $(\overline{X},d\lambda_X)$. The same authors calls an embedding {\bf symplectically knotted} (relative some other embedding) if there exits an ambient symplectomorphism inside the completed Liouville domain that takes the image of one embedding to the other, but when no such symplectomorphism exists of the original Liouville domain.

We now show that, in view of Corollary \ref{cor:2}, the embedding of a domain can be shown to be symplectically knotted by considering Lagrangian tori contained inside its boundary.
\begin{thm}
\label{thm:knotted}
Let
$$Y \subset \{ \|z_1\|^2+2\|z_2\|^2 \le 1\} \subset (D^4,\omega_0)$$
be any closed symplectic domain that satisfies
$$ Y \cap \{ \|z_1\|^2+2\|z_2\|^2 = 1\} = S^1(1/\sqrt{3}) \times S^1(1/\sqrt{3}).$$
There exists a symplectic embedding
$$\phi \colon (Y,\omega_0) \hookrightarrow (B^4,\omega_0)$$
for which the monotone Lagrangian torus $S^1(1/\sqrt{3}) \times S^1(1/\sqrt{3}) \in \partial Y$ is mapped to a torus $L$ as in Theorem \ref{thm:2} and such that, for any $\epsilon>0,$ one can find a symplectomorphism $\Phi \colon (B^4(1+\epsilon),\omega_0) \to (B^4(1+\epsilon),\omega_0)$ that satisfies $\Phi(\phi(Y))=Y$.
\end{thm}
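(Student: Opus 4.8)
The plan is to extract $\phi$ from the probe construction of the torus $L$ given in Section~\ref{sec:ProofThm2}, using that this construction needs only an arbitrarily small amount of room outside the unit ball. Concretely, the first step is to record the following refinement of (the proof of) Theorem~\ref{thm:2}: for each $\epsilon>0$ there is a Hamiltonian isotopy $\{\psi^\epsilon_t\}_{t\in[0,1]}$ of $(\C^2,\omega_0)$ generated by a Hamiltonian compactly supported in $B^4(1+\epsilon)$, with $\psi^\epsilon_0=\id$, whose time-one map restricts on the ellipsoid $E:=\{\|z_1\|^2+2\|z_2\|^2\le 1\}$ to one and the same symplectic embedding $\Psi\colon E\hookrightarrow\C^2$, where $\Psi$ is the identity near the circle $C:=E\cap S^3=\{z_2=0,\ \|z_1\|=1\}$, maps $E\setminus C$ into the open ball $B^4$, and satisfies $\Psi(S^1(1/\sqrt3)\times S^1(1/\sqrt3))=L$. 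Geometrically: the probe producing $L$ passes through the moment image of the product torus and protrudes from the moment triangle of $B^4$ by a distance one may take as small as desired, while the associated shear, and hence its effect on $E$, is essentially independent of this choice; see Figures~\ref{fig:probe1} and~\ref{fig:probe2}.

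Granting this, set $\phi:=\Psi|_Y$. Since $Y\subset E$ this is a symplectic embedding. Moreover $Y\cap C=\emptyset$: a point of $Y$ lying on $S^3$ would lie in $E\cap S^3=C\subset\partial E$, hence in $Y\cap\partial E=S^1(1/\sqrt3)\times S^1(1/\sqrt3)$, which does not meet $C$; thus $Y\subset E\setminus C$, so $\phi(Y)=\Psi(Y)\subset B^4$. Since also $\phi(S^1(1/\sqrt3)\times S^1(1/\sqrt3))=L$, this produces the embedding $\phi$ with the two asserted properties. (One can double-check consistency of the input: as $\psi^\epsilon_1$ preserves $B^4(1+\epsilon)$ and $E\subset D^4\subset B^4(1+\epsilon)$, one has $\Psi(E)\subset\bigcap_{\epsilon>0}B^4(1+\epsilon)=D^4$, in agreement with $\Psi(E\setminus C)\subset B^4$ and $\Psi(C)=C$.)

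For the last assertion fix $\epsilon>0$. Then $\psi^\epsilon_1$ is a compactly supported symplectomorphism of $B^4(1+\epsilon)$ whose restriction to $Y$ is $\phi$, so $\Phi:=(\psi^\epsilon_1)^{-1}\colon B^4(1+\epsilon)\to B^4(1+\epsilon)$ satisfies $\Phi(\phi(Y))=(\psi^\epsilon_1)^{-1}(\psi^\epsilon_1(Y))=Y$, as required. (If one does not want to insist on the literal $\epsilon$-independence of $\psi^\epsilon_1|_Y$, one may instead appeal to the symplectic isotopy extension theorem: the isotopy $\{\psi^\epsilon_t|_Y\}$ of symplectic embeddings of the compact domain $Y$ into $B^4(1+\epsilon)$ has vanishing flux, being the restriction of a Hamiltonian isotopy, so it extends to an ambient Hamiltonian isotopy of $B^4(1+\epsilon)$; the inverse of its time-one map is then $\Phi$.)

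The substantive work is entirely in the first step: one must organise the probe construction of $L$ from Section~\ref{sec:ProofThm2} so that it uses support in $B^4(1+\epsilon)$ for \emph{every} $\epsilon>0$ (i.e.\ the protrusion of the probe past $\partial B^4$ costs arbitrarily little), so that the induced shear pushes $E\setminus C$ — in particular the product torus — strictly inside the open unit ball at time one, and so that it is trivial near the circle $C$. These are statements about the explicit shear in probe-adapted coordinates rather than about $L$ as an abstract Lagrangian, so they must be verified from the formulas; the rest is formal. (As expected from Corollary~\ref{cor:2}, the embedding $\phi$ is moreover genuinely knotted in the sense of Gutt--Usher, since a symplectomorphism of $B^4$ itself carrying $\phi(Y)$ onto $Y$ would carry the distinguished boundary torus $L$ of $\phi(Y)$ onto $S^1(1/\sqrt3)\times S^1(1/\sqrt3)$, contradicting that corollary.)
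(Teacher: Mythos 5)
Your overall strategy coincides with the paper's: realise the probe isotopy taking $S^1(1/\sqrt3)\times S^1(1/\sqrt3)$ to $L$ by an ambient Hamiltonian isotopy supported in an arbitrarily thin neighbourhood of the hypersurface $Q=\{\|z_1\|^2+2\|z_2\|^2=1\}$ (hence inside $B^4(1+\epsilon)$ for any prescribed $\epsilon>0$), let $\phi$ be the restriction of the time-one map to $Y$, and let $\Phi$ be its inverse. The formal second half of your argument is fine. The problem is that one of the properties you assign to $\Psi$ in your first step is not achievable. The circle $C=E\cap S^3=S^1\times\{0\}$ is $\psi_2(\{0\}\times S^1)$, i.e.\ the image of the \emph{centre} of the disc leaf of the probe; and the curve isotopy $\gamma_t$ runs from a circle enclosing the origin to one that does not, so any area-preserving time-one map of the leaf realising it must carry the origin into the region enclosed by $\gamma_1$ (this is exactly the remark in Section~\ref{sec:ProofThm2} that the tori $L_t$ must cross $S^1\times\{0\}$). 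Hence $\Psi$ cannot be the identity near $C$: it displaces $C$ off itself into $Q\setminus C$, and correspondingly $\Psi^{-1}(C)$ is a circle in $Q\setminus C$ whose points, which lie in $E\setminus C$, are mapped onto $C\subset S^3$. So the assertion $\Psi(E\setminus C)\subset B^4$, on which your deduction $\phi(Y)\subset B^4$ rests, is false. The correct and easy substitute is the one the paper uses: the isotopy preserves $Q$ setwise (it is a lift of a leafwise isotopy cut off transversely), hence preserves $E$; therefore $\Psi(Y)\cap S^3\subset E\cap S^3=C\subset Q$, while $\Psi(Y)\cap Q=\Psi(Y\cap Q)=L$ and $L\cap C=\emptyset$ because $\gamma_1$ avoids the origin of the leaf; thus $\phi(Y)\subset B^4$.

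Beyond this, the step you explicitly defer---that the leafwise isotopy extends to an ambient Hamiltonian isotopy supported in an arbitrarily thin neighbourhood of $Q$ and preserving $Q$ setwise---is precisely the content of the paper's proof: one extends $\psi_2$ to a parametrisation $\Psi_2$ of a $\delta$-thickening of the probe which pulls $\omega_0$ back to the product form $\omega_0+s\,ds\wedge d\varphi$, lifts the compactly supported disc Hamiltonian, and cuts it off by a bump function in the transverse coordinate $s$. Without some version of this construction your first step remains an assertion rather than a proof. Two smaller points: the probe itself does not protrude from $D^4$ (it meets $S^3$ exactly in $C$); only its $\delta$-thickening does. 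And the restriction of the time-one map to $Y$ does depend on $\delta$ away from $Q$, so one cannot literally arrange $\epsilon$-independence of $\psi^\epsilon_1|_Y$; the viable route is the one in your parenthetical, fixing $\phi$ once and producing $\Phi$ for each $\epsilon$ separately.
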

The construction of the symplectic embedding is a slight extension of the construction of the Lagrangian $L$ in the proof of Theorem \ref{thm:2}, and it is performed in Section \ref{proof:sympknotted}. In view of Corollary \ref{cor:2} combined with \cite[Theorem 1.2]{Dimitroglou:Extremal} we can now conclude that:
\begin{cor}
Assume that
$$Y \subset D^4(\sqrt{2/3})=\{ \|z_1\|^2+\|z_2\|^2 \le 2/3\}$$
is satisfied in addition to the above. Then the embedding $\phi(Y) \subset (B^4,\omega_0)$ is symplectically knotted relative the canonical inclusion $Y \subset (B^4,\omega_0).$
\end{cor}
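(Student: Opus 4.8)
The plan is to unwind the Gutt--Usher definition of a symplectically knotted embedding and check its two halves separately: first, that the embedding $\phi$ and the canonical inclusion $Y\subset(B^4,\omega_0)$ become equivalent through an ambient symplectomorphism of the Liouville completion; and second — this being the actual content of the corollary — that under the extra hypothesis $Y\subset D^4(\sqrt{2/3})$ no symplectomorphism of $(B^4,\omega_0)$ itself carries $\phi(Y)$ to $Y$.

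For the first half there is essentially nothing new to prove: Theorem~\ref{thm:knotted} already provides, for every $\epsilon>0$, a symplectomorphism $\Phi\colon(B^4(1+\epsilon),\omega_0)\to(B^4(1+\epsilon),\omega_0)$ with $\Phi(\phi(Y))=Y$. Since $\phi^{t}_{\lambda_0}(B^4)=B^4(e^{t/2})$, the enlarged ball $B^4(1+\epsilon)$ is just a conformal enlargement of $B^4$ sitting inside the Liouville completion $(\C^2,\lambda_0)$ of $(B^4,\lambda_0)$; arranging (as the construction in Section~\ref{proof:sympknotted} does) that $\Phi$ be supported in the interior and extending it by the identity yields the required ambient symplectomorphism of the completion taking $\phi(Y)$ to $Y$.

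For the second half I would argue by contradiction. Suppose $\Psi\colon(B^4,\omega_0)\to(B^4,\omega_0)$ is a symplectomorphism with $\Psi(\phi(Y))=Y$. By the added hypothesis $Y\subset D^4(\sqrt{2/3})$ we get
$$\phi(Y)=\Psi^{-1}(Y)\subset \Psi^{-1}\bigl(D^4(\sqrt{2/3})\bigr),$$
so $\Psi^{-1}$ restricts to a symplectic embedding of the \emph{closed} ball $D^4(\sqrt{2/3})$ into $(B^4,\omega_0)$ whose image contains $\phi(Y)$. As $S^1(1/\sqrt3)\times S^1(1/\sqrt3)\subset\partial Y\subset Y$, the monotone Lagrangian torus $L=\phi\bigl(S^1(1/\sqrt3)\times S^1(1/\sqrt3)\bigr)$ — which by Theorem~\ref{thm:knotted} is a torus as in Theorem~\ref{thm:2}, of symplectic action class $\Z\pi/3$ — lies inside the image of this closed-ball embedding. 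Then \cite[Theorem~1.2]{Dimitroglou:Extremal} (the result referred to in the Remark after Theorem~\ref{thm:1}, to the effect that a monotone torus contained in the image of a symplectic embedding of the closed ball $D^4(\sqrt{2/3})$ is Hamiltonian isotopic to a product torus) applies, and since the only monotone product torus of action $\Z\pi/3$ is $S^1(1/\sqrt3)\times S^1(1/\sqrt3)$, this produces a symplectomorphism of $(B^4,\omega_0)$ taking $S^1(1/\sqrt3)\times S^1(1/\sqrt3)$ to $L$ — contradicting Corollary~\ref{cor:2}.

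The only genuinely delicate points are bookkeeping ones. I expect the main obstacle to be pinning down that the conclusion of \cite[Theorem~1.2]{Dimitroglou:Extremal} is stated strongly enough — namely that the Hamiltonian isotopy to a product torus is realised \emph{inside} $(B^4,\omega_0)$, not merely inside the completed Liouville manifold — so that it really does contradict the ball-version statement of Corollary~\ref{cor:2}; and, secondarily, confirming that the symplectomorphism $\Phi$ of Theorem~\ref{thm:knotted} may be taken with support in the open enlarged ball so that it extends by the identity to the completion. Both should be immediate from the relevant constructions, and everything else is a direct substitution of the hypotheses into already-established results.
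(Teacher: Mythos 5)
Your argument is correct and follows essentially the same route as the paper: the extra hypothesis turns $\Phi^{\pm1}$ into a symplectic embedding of the closed ball $D^4(\sqrt{2/3})$ adapted to $L$, the extremal-torus result \cite[Theorem 1.2]{Dimitroglou:Extremal} forces $L$ into the boundary sphere of that ball and hence identifies it with a product torus up to a symplectomorphism of $(B^4,\omega_0)$, contradicting the exoticness of $L$. The only cosmetic difference is that you contradict Corollary \ref{cor:2} directly, whereas the written proof unpacks that corollary and instead concludes via Part (1) of Theorem \ref{thm:1} together with Theorem \ref{thm:2}; the two are logically equivalent, and the point you flag as delicate (that the resulting Hamiltonian isotopy lives inside $B^4$) is exactly what the elementary classification of Lagrangian tori in a round $S^3$, invoked elsewhere in the paper, supplies.
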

\begin{proof}
  Any symplectomorphism $\Phi$ that takes $\phi(Y)$ to $Y$ takes the Lagrangian torus $\phi(S^1(1/\sqrt{3}) \times S^1(1/\sqrt{3}))=L$ to a Lagrangian torus that is contained inside $D^4(\sqrt{2/3})$ and which is extremal in the sense of \cite{Cieliebak:PuncturedHolomorphic} relative the latter ball. By \cite[Theorem 1.2]{Dimitroglou:Extremal} it then follows that $L$ must be contained entirely inside the boundary
  $$\partial D^4(\sqrt{2/3})=S^3(\sqrt{2/3})$$
  and, by the assumptions on the domain, it must thus be equal to the standard product torus
  $$ Y \cap S^3(\sqrt{2/3})=S^1(1/\sqrt{3}) \times S^1(1/\sqrt{3}).$$
Assuming the existence of such a symplectomorphism $\Phi \colon B^4 \xrightarrow{\cong} B^4$ we can thus deduce the existence of a symplectic embedding of a closed ball $\Phi^{-1}D^4(\sqrt{2/3}) \subset B^4$ which intersects the exotic torus $L$ only along its boundary. Using Part (1) of Theorem \ref{thm:1} we obtain a Hamiltonian isotopy from $L$ to a standard product torus inside $B^4$. This yields the sought contradiction with Theorem \ref{thm:2}
\end{proof}

\begin{exm}
The above method in particularly yields a symplectically knotted embedding of the polydisc $Y=D^2(1/\sqrt{3}) \times D^2(1/\sqrt{3}) \subset (B^4,\omega_0)$ into the unit ball. This domain was not considered in \cite{Gutt:SymplecticallyKnotted}, and seems to be of a rather different nature than the examples therein. It is unclear to the author if this embedding remains symplectically knotted also inside $B^2(1) \times B^2(1)$; if this is the case, then it would answer Question 1.9 in the aforementioned paper.
\end{exm}

\subsection{Proposed notion: Bulky Hamiltonian isotopy}
In view of the previous theorem, we find the following proposed definition to be natural. Consider two subsets $A_0, A_1 \subset (X^{2n},d\lambda)$ of a compact Liouville cobordism with smooth boundary $\partial X=\partial_+ X \sqcup \partial X_-$ (the latter being the decomposition into its convex and concave components), and denote by $(\overline{X}^{2n},d\lambda)$ the completion of $(X,d\lambda)$ to a Liouville cobordism with noncompact cylindrical ends. In particular,
$$(\overline{X} \setminus (X\setminus \partial X),d\lambda) \cong ([0,+\infty) \times \partial_+ X \sqcup (-\infty,0] \times \partial_- X,d(e^s\lambda|_{T\partial X}))$$
where the latter exact symplectic manifold is the symplectisation of the boundary of $(X,d\lambda).$
\begin{defn}
A Hamiltonian isotopy from $A_0$ to $A_1$ inside the completion of $(X,d\lambda),$ i.e.~a Hamiltonian isotopy
$$\phi_{H_t}^t \colon (\overline{X},d\lambda)\to(\overline{X},d\lambda)$$
which satisfies
$$ \phi_{H_t}^0=\id,\:\:\text{and}\:\:\phi_{H_t}^1(A_0)=A_1,$$
is said to be a \emph{bulky  Hamiltonian isotopy from $A_0$ to $A_1$ relative $X$} if there exists no smooth one-parameter family $\phi^{t,s}$ of Hamiltonian isotopies of the same kind that satisfies $\phi^{t,0}=\phi_{H_t}^t$, while
$$\phi^{t,1}(A_0) \subset X$$
holds for all $t \in [0,1].$
\end{defn}
In other words, we can rephrase the Theorem \ref{thm:2} as the statement that ``the Hamiltonian isotopies in $\C^2$ that take $L$ to a standard torus are all bulky relative the unit ball.''

We end with an additional example, again of rather elementary nature.
\begin{exm}
  Again consider the monotone product torus
  $$L_{\OP{Cl}}=S^1(1/\sqrt{3}) \times S^1(1/\sqrt{3}).$$
  There exists a Hamiltonian isotopy which takes $L$ to itself, through tori contained entirely inside $S^3(\sqrt{2/3})$, with the result that the two factors of the torus become interchanged; e.g.~take a suitable path of matrices in $U(2)$ which start with the identity and end with $(z_1,z_2) \mapsto (z_2,z_1)$. However, there exists no such Hamiltonian isotopy which is contained entirely inside $B^2(\sqrt{2/3}) \times B^2(\sqrt{2/3})$. This can be seen by the invariance of the two terms of the superpotential of the monotone torus $$L_{\OP{Cl}} \subset (\CP^1 \times \CP^1,(2/3)\omega_{\OP{FS}} \oplus (2/3)\omega_{\OP{FS}})$$
  corresponding to the Maslov-two discs that pass through the divisor
  $$\{\infty\} \times \CP^1 \: \cup \: \CP^1 \times \{\infty\}.$$
  More precisely, we need to consider a version of the superpotential which also keeps track of the relative homology classes if the discs. We again refer to ~\cite{Chekanov:TwistTori} for the computation of the superpotential, and note that the relative homology classes of these discs are invariant for Hamiltonian isotopies of the torus that stay away from the divisor. Rephrased using our newly defined notion: ``the Hamiltonian isotopies inside $\C^2$ that interchange the two factors of $L_{\OP{Cl}}$ are bulky with respect to any Liouville domain contained inside $B^2(\sqrt{2/3}) \times B^2(\sqrt{2/3})$.''
\end{exm}
  
\subsection{Holomorphic curve invariants}
\label{sec:holomorphic}
As previously mentioned, we rely on the work \cite{Chekanov:TwistTori} in order to show the nonexistence of a Hamiltonian isotopy inside $B^4$ which takes the product torus to $L$ in Theorem \ref{thm:2}. It is worthwhile to elaborate a bit on the precise mechanism which distinguishes between these tori. The tool used in~\cite{Chekanov:TwistTori} for distinguishing different Hamiltonian isotopy classes of Lagrangian tori is the theory of pseudoholomorphic curves, more precisely by comparing the superpotentials of the different tori. The superpotential is an invariant which counts the number of families of pseudoholomorphic Maslov-two discs with boundary on the torus; see e.g.~the work~\cite{Auroux:SpecialLagrangian} by D.~Auroux.

The lesson that we learn from the examples in Theorem \ref{thm:2} is that, even though we are interested in obstructing Hamiltonian isotopy inside the ball, it is completely crucial that we consider the superpotential which counts holomorphic discs in all of $\CP^2=\overline{B^4}$. If instead the superpotential inside the ball was to be considered, it would give the same answer for $L$ and the product torus; the reason is that, for a monotone Lagrangian torus inside the ball, the superpotential is invariant under Hamiltonian isotopy as well as linear rescalings.

In conclusion, for the torus $L$ considered here, it is the terms in the superpotential that count the discs in $\CP^2$ passing through the line at infinity $\ell_\infty=\CP^2 \setminus B^4$ that distinguish it from the product torus. In general this is not a well-defined count if the torus is merely assumed to be monotone inside the ball (being monotone inside $\CP^2$ is a stronger condition). However, in our case the count of Maslov-two discs that pass through the line at infinity is well-defined for the following reason. For an almost complex structure on $\CP^2=\overline{B^4}$ which makes the line at infinity holomorphic, the class of pseudoholomorphic discs of Maslov index two that pass through the line at infinity are a priori of \emph{minimal} symplectic area, given the symplectic action properties of the tori $L$ under consideration. Hence, the count of these discs is invariant under deformations of the almost complex structure that keeps the line at infinity holomorphic. However, for a ball which is larger than the unit ball, the corresponding discs are no longer of minimal symplectic area. Since we then cannot exclude bubbling from occurring while varying the almost complex structure, we no longer have any reasons to expect that the number of such disc is an invariant that can be used to obstruct the existence of a Hamiltonian isotopy. (Indeed, Theorem \ref{thm:1} can provide a Hamiltonian isotopy in this case.)

\section{The proof of Part (1) of Theorem \ref{thm:1}}

Denote by
\begin{gather*}
\varphi \colon (D^4(\sqrt{2/3}),\omega_0) \hookrightarrow (B^4,\omega_0),\\
L \subset B^4 \setminus \varphi(B^4(\sqrt{2/3})),
\end{gather*}
the symplectic embedding whose existence is assumed.

After the application of the positive Liouville flow $\phi^{4\epsilon}_{\lambda_0}$ to both $L$ and $\varphi(B^4(\sqrt{2/3}))$ for some small $\epsilon>0$ (recall that $\phi^{4\epsilon}_{\lambda_0}$ is the conformal symplectomorphism given by scalar multiplication with $e^{2\epsilon}$) we may in addition assume that the new Lagrangian torus has the symplectic actions $\Z\pi e^{4\epsilon}r^2$ while, of course, it now is disjoint from the rescaled image $e^{2\epsilon}\varphi(B^4(\sqrt{2/3}))$ of a symplectic ball of slightly larger radius $e^{2\epsilon}\sqrt{2/3}$. (Here we have made use of the assumption in Theorem \ref{thm:1} that the closure of the image of $\varphi$ is contained inside the \emph{open} unit ball.) If we manage to construct the sought Hamiltonian isotopy after the above rescaling, the general case will then also follow immediately. Indeed, it suffices to rescale the produced Hamiltonian isotopy by the negative-time Liouville flow $\phi^{-4\epsilon}_{\lambda_0}.$

In view of the above, we will now restrict attention to the case when $r>1/\sqrt{3}$ and $\varphi \colon (B^4(e^{2\epsilon}\sqrt{2/3}),\omega_0) \hookrightarrow (B^4 \setminus L,\omega_0)$ satisfies
$$\overline{\varphi(B^4(e^{\epsilon}\sqrt{2/3}),\omega_0)} \subset B^4 \setminus L,$$
i.e.~we can find a closed ball of radius $e^\epsilon\sqrt{2/3}$ in the complement of $L$.

\subsection{A neck-stretching sequence}
\label{sec:neckstretch}
Recall that symplectic reduction applied to the boundary $\partial D^4=S^3 \to \CP^1$ produces a compactification $\overline{B^4} = \CP^2$ where the latter is equipped with the Fubini--Study symplectic form $\omega_{\OP{FS}}$ for which a line has symplectic are equal to $\int_\ell \omega_{\OP{FS}}=\pi$. In particular, using $\ell_\infty$ to denote the line at infinity, we have $(B^4,\omega_0)=(\CP^2 \setminus \ell_\infty,\omega_{\OP{FS}})$.

The main technical ingredient that we will need is neck-stretching around a hypersurface of contact type that can be identified with a small unit normal bundle around $L.$ Neck-stretching first appeared in work \cite{Eliashberg:SFT} by Y.~Eliashberg, A.~Givental, and H.~Hofer, and was later made precise in the SFT compactness theorem~\cite{Bourgeois:Compactness} by F.~Bourgeois, Y.~Eliashberg, H.~Hofer, C.~Wysocki, and E.~Zehnder and independently~\cite{Cieliebak:Compactness} by K.~Cieliebak and K.~Mohnke. Roughly speaking, neck-stretching is a conformal limit in which the symplectic manifold splits into several pieces, along with the pseudoholomorphic curves that it contains. We are interested in considering the neck-stretching limits of the foliation of pseudoholomorphic lines of $\CP^2,$ which persists for arbitrary compatible almost complex structures by Gromov's classical result~\cite{Gromov:Pseudo}. We work in the same setting of~\cite[Section 3]{Dimitroglou:Isotopy} and direct the reader to that article for the technical details.

A Weinstein neighbourhood of $L$ becomes a concave cylindrical end
$$((-\infty,\log {(4\delta)}] \times UT^*L,d(e^t(\alpha))) \hookrightarrow \left(B^4 \setminus \varphi(B^4(e^\epsilon\sqrt{2/3})),\omega_0\right)$$
after removing the Lagrangian torus, i.e.~when considered in the symplectic manifold $B^4 \setminus L.$ Here $\alpha=pdq|_{T(UT^*L)}$ is chosen to be the contact form on the unit cotangent bundle for a choice of \emph{flat} metric on $\T^2$, and $\delta>0$ is taken sufficiently small. The neck stretching is performed along the hypersurface 
$$\{\log{(3 \delta)}\} \times U T^*L \hookrightarrow B^4 \setminus \varphi(B^4(e^\epsilon\sqrt{2/3}))$$
of contact type that can be seen as a spherical normal bundle of $L$. Stretching the neck amounts to choosing a particular sequence $J_\tau,$ $\tau \ge 0,$ of compatible almost complex structures on $\CP^2$ determined as follows.
\begin{itemize}
\item All $J_\tau$ are fixed outside of the neighbourhood
$$[\log{(2\delta)},\log{(4\delta)}] \times U T^*L$$
of the above spherical normal bundle of $L$;
\item All $J_\tau$ are equal to the standard integrable complex structure $i$ near the divisor $\ell_\infty$, while inside the subset
  $$((-\infty,\log {(2\delta)}] \times UT^*L,d(e^t(\alpha))) $$
  of the concave end they are equal to the almost complex structure $J_{\OP{std}}$ defined in \cite[Section 4]{Dimitroglou:Isotopy}; and
\item Near the spherical normal bundle of $L$ the almost complex structure $J_\tau$ is the pull-back of the cylindrical almost complex structure $J_{\OP{cyl}}$ determined by the flat metric under a (non-symplectic!) diffeomorphism
  $$[\log{(2\delta)},\log{(4\delta)}] \xrightarrow{\cong} [\log{(2\delta)},\log{(4\delta)}+\tau] $$
extended to the $UT^*L$-factor of the symplectisation by the identity.
\end{itemize}
In particular, one obtains a limit compatible almost complex structure on $\CP^2 \setminus L$ that we denote by $J_\infty$ and which is cylindrical on the entire concave end near $L$. We refer to \cite[Sections 3 and 4]{Dimitroglou:Isotopy} for more details.

Now comes the point when we will use the existence of the embedding of the symplectic ball as stipulated by the assumptions of the theorem; we choose the neck-stretching sequence so that
\begin{enumerate} [label={($\dagger$)}, ref={($\dagger$)}]
\item the almost complex structures $J_\tau$ all coincide with the push-forward of the standard almost complex structure $i$ under the symplectomorphism $\varphi$ in the subset $\varphi(B^4(e^{\epsilon}\sqrt{2/3})) \subset B^4 \setminus L$ \label{acs}
\end{enumerate}
holds in addition to the bullet points listed above.

For the analysis that we conduct it is crucial that the cylindrical almost complex structure is chosen with respect to the contact form on $UT^*L$ induced by the \emph{flat} metric on $L$. The reason is that, for instance, the non-existence of contractible geodesics makes the breaking analysis of pseudoholomorphic curves significantly simpler. Recall that the SFT compactness theorem implies that a sequence of finite energy $J_\tau$-holomorphic curves has a subsequence that converges to a pseudoholomorphic building which consists of several levels of punctured finite-energy pseudoholomorphic curves \cite{Bourgeois:Compactness}, \cite{Cieliebak:Compactness}. These finite energy curves are asymptotic to Reeb chords on $UT^*L,$ i.e.~lifted geodesics for the flat metric in the case under consideration.

We will only be interested in the case of a sequence of $J_\tau$-holomorphic degree one curves in $\CP^2$, which are usually called {\bf (pseudoholomorphic) lines}. Recall that for any given tame almost complex structure there exists a unique pseudoholomorphic line through any two given points, or through one fixed point with a given complex tangency, by Gromov's classical result \cite{Gromov:Pseudo}. In addition, any pseudoholomorphic line is embedded. In the case of an SFT-limit of lines the corresponding building consists of:
\begin{itemize}
\item a non-empty {\bf top level} consisting of punctured $J_\infty$-holomorphic spheres in $\CP^2 \setminus L$;
\item a (possibly zero) number of {\bf middle levels} consisting of punctured pseudoholomorphic spheres in $\R \times UT^*L$ for the cylindrical almost complex structure $J_{\OP{cyl}};$ and
\item a (possibly empty) {\bf bottom level} consisting of punctured pseudoholomorphic spheres in $T^*L$ for the almost complex structure $J_{\OP{std}}$ defined in \cite[Section 4]{Dimitroglou:Isotopy},
\end{itemize}
where the punctured spheres moreover can be glued along the punctures to yield a continuous map of degree one from a single sphere into $\CP^2$. See Figure \ref{fig:building} for examples. Of course, it is also possible that the limit curve consists of a single component contained entirely in the top level; this must then be a sphere without any punctures, and we call it {\bf unbroken} (it is a compact $J_\infty$-holomorphic sphere of degree one in the usual sense). By positivity of intersection, established in \cite{McDuff:LocalBehaviour} by D.~McDuff, one can deduce that any component arising in the limit is a (trivial or nontrivial) branched cover of an embedded punctured sphere. Note that the almost complex structures $J_{\OP{std}}$ and $J_{\OP{cyl}}$ used here have the feature that the canonical $\T^2$-action by isometires on the flat torus $L$ lift to an action by biholomorphisms; see \cite[Section 4]{Dimitroglou:Isotopy}.

In the following we will use the (Fredholm) index $\OP{ind}(u)$ of a punctured sphere $u$ to denote the expected dimension of the component of the moduli space which contains it, where the moduli space consists of curves up to reparametrisations considered without asymptotic constraints at the Bott manifolds of Reeb orbits. For a $k$-punctured sphere $u$ inside $\CP^2 \setminus L$ this index can be expressed by
$$\OP{ind}(u)=k-2+\mu(\overline{u})$$
where $\overline{u}$ is its compactification of $u$ to a chain in $\CP^2$ with boundary on $L$ and where $\mu$ denotes the Maslov class. See \cite[Section 3]{Dimitroglou:Isotopy} for more details.

Here we recall some crucial results from \cite{Dimitroglou:Isotopy}.
\begin{lem}[Section 3 \cite{Dimitroglou:Isotopy}]
\label{lem:index}
\begin{enumerate}
\item For any punctured sphere $u$ in $\CP^2\setminus L$, the Fredholm index of a branched cover $\tilde{u}$ satisfies
$$ \OP{ind}(\tilde{u}) \ge d\cdot \OP{ind}(u) $$
where $d \ge 1$ is the degree of the cover;
\item For a generic almost complex structure $J_\infty$ on $\CP^2\setminus L$ it is the case that $\OP{ind}(u) \ge 0$ for any punctured sphere in $\CP^2 \setminus L$, where this index moreover coincides with the dimension of the moduli space which contains $u$ whenever the curve is simply covered. In addition $\OP{ind}(u) \ge 1$ is odd if $u$ is a plane; and
\item If $v_1,\ldots,v_{N_1} \subset \CP^2 \setminus L$ and $w_1,\ldots,w_{N_2}$ are punctured spheres which constitute the components of a broken line, where $w_i$ reside either in middle levels $\R \times UT^*\T^2$ or the bottom level $T^*\T^2$, then the equality
$$ \sum_{i=1}^{N_1} \OP{ind}(v_i)-\sum_{i=1}^{N_2}\chi(w_i)=4 $$
is satisfied, where $\chi(w_i) \le 0$.
\end{enumerate}
\end{lem}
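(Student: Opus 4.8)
All three items are Fredholm-index computations in the neck-stretched picture, and the plan is to reconstruct the arguments of \cite[Section 3]{Dimitroglou:Isotopy}. For Part (1) I would argue by Riemann--Hurwitz. Write the $k$-punctured sphere $u$ and let $\tilde u = u \circ \pi$, where $\pi$ is the degree-$d$ branched cover of punctured spheres realising $\tilde u$; this extends to a degree-$d$ branched cover of the compactified domains, both closed spheres, so its total ramification is $2(d-1)$. Separating the ramification lying over the punctures of $u$ from the remaining interior ramification $b' \geq 0$ shows that $\tilde u$ has $\tilde k = d(k-2) + 2 + b'$ punctures; since relative Chern and Maslov numbers are multiplicative under covers, $\mu(\overline{\tilde u}) = d\,\mu(\overline u)$, and therefore
\[
\OP{ind}(\tilde u) = \tilde k - 2 + \mu(\overline{\tilde u}) = d\bigl(k - 2 + \mu(\overline u)\bigr) + b' = d\cdot\OP{ind}(u) + b' \ \geq\ d\cdot\OP{ind}(u).
\]

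For Part (2), the inequality $\OP{ind}(u) \geq 0$ for \emph{somewhere injective} punctured spheres, together with the identification of $\OP{ind}(u)$ with the dimension of the moduli component through $u$ for such curves, is the standard transversality statement for a generic $J_\infty$, run inside the restricted class of perturbations that keep $J_\infty$ of the prescribed flat type near $L$ and integrable near $\ell_\infty$; for a multiply covered $u$ the inequality then follows from Part (1). For the last clause, a plane has $k = 1$ and hence $\OP{ind}(u) = \mu(\overline u) - 1$, so that, combined with $\OP{ind}(u) \geq 0$, it suffices to know that $\mu(\overline u)$ is even; this is the parity of the Maslov class of holomorphic discs bounding the Lagrangian torus $L$, established in \cite{Dimitroglou:Isotopy}.

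For Part (3) I would argue purely from the topology of the broken configuration together with a Maslov count, needing no index information about the lower levels. By hypothesis the pieces glue along matched punctures into a single degree-one sphere in $\CP^2$, i.e.\ a line $\ell$; since spheres glue to a sphere, the gluing graph is a tree. Writing $m = N_1 + N_2$ for the number of pieces and $P = m - 1$ for the number of gluings, all punctures are matched in pairs, so if $v_i$ has $k_i$ punctures and $w_j$ has $l_j$ punctures then $\sum_i k_i + \sum_j l_j = 2P$ and hence $\sum_i k_i + \sum_j l_j - 2m = -2$. Moreover, since the glued curve represents the line class, the total Maslov number of the assembled chain equals $2\langle c_1(\CP^2),[\ell]\rangle = 6$; the caps at matched punctures cancel pairwise and the lower-level pieces contribute nothing (they lie in $T^*\T^2$ and $\R \times UT^*\T^2$, where the first Chern class and the reference flat trivialisations give vanishing Maslov number), so $\sum_i \mu(\overline{v_i}) = 6$. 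Combining these two facts with $\OP{ind}(v_i) = k_i - 2 + \mu(\overline{v_i})$ and $-\chi(w_j) = l_j - 2$ gives $\sum_i \OP{ind}(v_i) - \sum_j \chi(w_j) = \bigl(\sum_i k_i + \sum_j l_j - 2m\bigr) + \sum_i \mu(\overline{v_i}) = -2 + 6 = 4$. Finally $\chi(w_j) \leq 0$ since each $w_j$ has at least two punctures: there are no closed holomorphic spheres in the exact manifolds $T^*\T^2$ or $\R \times UT^*\T^2$, and no planes either, because every Reeb orbit on $UT^*\T^2$ is the lift of a non-contractible flat geodesic and therefore cannot be capped off within a lower level.

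The step I expect to be the main obstacle is the transversality input in Part (2): one has to show regularity of somewhere injective punctured curves while only being allowed to perturb $J_\infty$ away from large regions of $\CP^2$ (it is pinned down to be flat near $L$, integrable near $\ell_\infty$, and, during the neck-stretching, equal to the pushforward of $i$ inside the embedded ball). This, together with the parity statement for the Maslov class of discs on $L$, is where the choice of a flat metric on $L$ and the careful setup of \cite[Sections 3--4]{Dimitroglou:Isotopy} are genuinely needed.
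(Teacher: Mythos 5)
Your proposal is correct and matches the intended argument: the paper's own proof of this lemma is nothing more than a citation to \cite[Lemma 3.3 and Proposition 3.5]{Dimitroglou:Isotopy}, and your reconstruction (Riemann--Hurwitz plus multiplicativity of the Maslov class for (1); transversality for somewhere-injective curves together with evenness of the Maslov class on the orientable torus $L$ for (2); the tree structure of the building, matching of punctures, and $\sum_i\mu(\overline{v_i})=2\langle c_1,[\ell]\rangle=6$ with vanishing contributions from the flat lower levels for (3)) is exactly the standard computation carried out there. Your derivation also explains directly why the right-hand side in (3) is $4$, consistent with the paper's remark that the reference obtains $2$ only because it imposes a generic point constraint.
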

\begin{proof}
The inequality in (1) was shown in the proof of \cite[Lemma 3.3]{Dimitroglou:Isotopy}. Part (2) is simply \cite[Lemma 3.3]{Dimitroglou:Isotopy}. The equality in (3) was shown in the end of the proof of \cite[Proposition 3.5]{Dimitroglou:Isotopy} (the right-hand side here is 4 instead of 2, as in the reference, since the latter considered indices after a generic point constraint).
\end{proof}
Straight-forward topological considerations of the possibilities of buildings in the class of a line in conjunction with the previous lemma now give us the following useful result:
\begin{cor}
\label{cor:index}
Assume that $J_\infty$ is a generic almost complex structure on $\CP^2 \setminus L$. Let $v_1,\ldots,v_{N_1} \subset \CP^2 \setminus L$ and $w_1,\ldots,w_{N_2}$ be punctured spheres which constitute the components of a broken line, where $w_i$ reside either in middle levels $\R \times UT^*\T^2$ or the bottom level $T^*\T^2$. Then
\begin{itemize}
\item There are at least two planes among the top-level components $\{v_i\}$, and any component $v_i$ in the top level satisfies $\OP{ind}(v_i) \in \{0,1,2,3\}$; and
\item If some $v_{i_0}$ satisfies $\OP{ind}(v_{i_0}) \ge 2$, then
\begin{itemize}
\item there are precisely two planes among the components $\{v_i\}$ in the top level,
\item the remaining components $v_i$, $i \neq i_0$, in the top level satisfy $\OP{ind}(v_i) \le 1$, and
\item all components $\{w_i\}$ in the middle and bottom levels are cylinders, while the top components $\{v_i\}$ are either cylinders or planes.
\end{itemize}
\end{itemize}
\end{cor}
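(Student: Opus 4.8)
The plan is to reduce the statement to a short combinatorial argument resting on two bookkeeping identities for the building, fed by the index bounds of Lemma~\ref{lem:index}. The first identity records that the components glue to a single sphere: Euler characteristic is additive when one glues punctured surfaces along matching boundary circles, so
$$\sum_{i=1}^{N_1}\chi(v_i)+\sum_{j=1}^{N_2}\chi(w_j)=\chi(S^2)=2 .$$
The second is Part~(3) of Lemma~\ref{lem:index}, namely $\sum_i\OP{ind}(v_i)-\sum_j\chi(w_j)=4$ with each $\chi(w_j)\le 0$, and with $\chi(w_j)=0$ exactly when $w_j$ is a cylinder. Adding the two identities and using $\OP{ind}(v_i)=k_i-2+\mu(\overline{v_i})$ for a $k_i$-punctured top-level component yields the clean consequence $\sum_i\mu(\overline{v_i})=6$, i.e.\ $\sum_i(\OP{ind}(v_i)+2-k_i)=6$; the second identity also gives $\sum_i\OP{ind}(v_i)\le 4$.

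First I would establish that there are at least two planes. A genuinely broken building has no closed top-level components (a closed sphere would form a connected component of the glued curve all by itself), so every top-level component that is not a plane has at least two punctures and hence $\chi(v_i)\le 0$. If $p$ denotes the number of planes among the $v_i$, then $\sum_i\chi(v_i)\le p$, while the first identity together with $\chi(w_j)\le 0$ gives $\sum_i\chi(v_i)\ge 2$; hence $p\ge 2$. For the bound $\OP{ind}(v_i)\in\{0,1,2,3\}$, the lower bound $\OP{ind}(v_i)\ge 0$ is Part~(2) of Lemma~\ref{lem:index}, and if $\OP{ind}(v_{i_0})\ge 4$ for some $i_0$ then $\sum_i\OP{ind}(v_i)\le 4$ forces $\OP{ind}(v_{i_0})=4$ and $\OP{ind}(v_i)=0$ for all $i\ne i_0$; but $4$ is even so $v_{i_0}$ is not a plane, and then the (at least two) planes provided by the previous step are distinct from $v_{i_0}$ and have odd index $\ge 1$, a contradiction.

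For the second bullet, assume $\OP{ind}(v_{i_0})\ge 2$, so $\OP{ind}(v_{i_0})\in\{2,3\}$ by the above. If $v_{i_0}$ is not a plane, the at-least-two planes contribute at least $2$ to $\sum_i\OP{ind}(v_i)$, so $\OP{ind}(v_{i_0})\le 2$, i.e.\ $\OP{ind}(v_{i_0})=2$, and every inequality is an equality: there are exactly two planes, both of index $1$, every other top-level component has index $0$, and $\sum_j\chi(w_j)=0$. If instead $v_{i_0}$ is a plane, then $\OP{ind}(v_{i_0})=3$ (odd), and running the same estimate against a second plane forces exactly two planes, the second of index $1$, all remaining top indices $0$, and again $\sum_j\chi(w_j)=0$. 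In both cases there are exactly two planes, every $v_i$ with $i\ne i_0$ has $\OP{ind}(v_i)\le 1$, and every $w_j$ is a cylinder. Finally, to see that all top-level components are planes or cylinders I would invoke $\sum_i\mu(\overline{v_i})=6$ with $\mu(\overline{v_i})=\OP{ind}(v_i)+2-k_i$: the two planes contribute $\OP{ind}(v_i)+1\in\{2,4\}$, while any non-plane top-level component contributes $\OP{ind}(v_i)+2-k_i$, which is $\le 2-k_i\le 0$ for $i\ne i_0$ and equals $4-k_{i_0}\le 2$ when $v_{i_0}$ itself is a non-plane; comparing with the total $6$ then forces every non-plane top-level component to have exactly two punctures, i.e.\ to be a cylinder.

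In essence this is a finite linear-algebra computation once the two identities are in hand, so I do not expect a serious obstacle. The only step needing genuine (if routine) care is the Euler-characteristic identity for the limit building — that the glued curve really is a single sphere, so that the Euler characteristics sum to $2$ — together with the elementary remark that a broken building cannot contain a closed component. Everything else is dictated by Lemma~\ref{lem:index} and the parity of the index of a pseudoholomorphic plane.
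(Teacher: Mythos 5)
Your argument is correct and is precisely the ``straight-forward topological considerations'' that the paper invokes without writing out: it combines Lemma~\ref{lem:index} (parts (2) and (3)) with the Euler-characteristic additivity $\sum_i\chi(v_i)+\sum_j\chi(w_j)=2$ for the glued degree-one sphere, which is exactly the intended route. All the case checks (parity of plane indices, $\sum_i\OP{ind}(v_i)\le 4$, and the forced equalities when some index is $\ge 2$) go through as you describe, so this is a complete and faithful filling-in of the omitted proof.
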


\subsection{Extracting an SFT-limit of lines}
Choose a generic point
$$\pt \in \varphi(B^4((e^{\epsilon}-1)\sqrt{2/3}))$$
and consider the $J_\tau$-holomorphic lines that pass through $\pt$ as well as some second fixed point on $L$. (By Gromov's result \cite{Gromov:Pseudo} there is always a unique such line.) A sequence of such lines for which $\tau \to +\infty$ has a convergent subsequence by the SFT compactness theorem~\cite{Bourgeois:Compactness}. Due to the point constraint on $L$ the limit is a pseudoholomorphic building in the class of a ``broken'' line that passes through both $\pt \in B^4 \setminus L$ as well as some point on the torus $L.$


The monotonicity property for the symplectic area of pseudoholomorphic curves (see~\cite{Sikorav:SomeProperties} by J.-C.~Sikorav) applied to the ball
$$\varphi(B^4_\pt(\sqrt{2/3})) \subset \varphi(B^4(e^\epsilon\sqrt{2/3}))$$
while using the property \ref{acs} satisfied by the almost complex structure implies that $$\int_{A_\pt}\omega_{\OP{FS}} \ge \pi 2/3$$ for any component $A_\pt\subset \CP^2 \setminus L$ in the top level of the limit building that passes through the point $\pt.$ In particular, since the total area of these components is $\int_{\ell_\infty}\omega_{\OP{FS}}=\pi$, there is a unique such component. From this we are able to conclude that:
\begin{lem}
\label{lem:plane}
For a generic point $\pt \in \varphi(B^4((e^{\epsilon}-1)\sqrt{2/3}))$ and a generic perturbation of the almost complex structure $J_\infty$ in a unit normal bundle of $L \cup \ell_\infty,$ we can assume that the component $A_\pt$ is
\begin{itemize}
\item disjoint from $\ell_\infty,$ 
\item of symplectic area $2 \pi r^2$ (where thus $r<1/\sqrt{2}$) and
\item of index three (i.e.~Maslov index four) and embedded (thus in particular it is not a branched cover).
\end{itemize}
\end{lem}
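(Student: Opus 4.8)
The plan is to push the analysis of the broken line extracted above — whose unique top-level component through $\pt$ is $A_\pt\subset\CP^2\setminus L$, of area $\ge\pi 2/3$ — by playing the index estimates of Corollary~\ref{cor:index} against area and action bounds coming from the hypothesis $r>1/\sqrt3$ and from monotonicity of holomorphic curves, pinning $A_\pt$ down component by component. Throughout I would work after a generic perturbation of $J_\infty$ supported in a normal neighbourhood of $L\cup\ell_\infty$ that keeps $\ell_\infty$ holomorphic and the structure cylindrical near $L$, so that every curve in sight is cut out transversally and the conclusions of Corollary~\ref{cor:index} apply.

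First I would show $A_\pt$ is disjoint from $\ell_\infty$. The line $\ell_\infty$ cannot itself be a component of the building: the top-level areas sum to $\int_{\ell_\infty}\omega_{\OP{FS}}=\pi$, so $\ell_\infty$ would then be the only top-level component, leaving no planes and no way to meet $\pt$, contradicting Corollary~\ref{cor:index}. Positivity of intersection with the $J_\infty$-holomorphic curve $\ell_\infty$ then shows that exactly one top-level component meets $\ell_\infty$, transversally once. Were it $A_\pt$, its compactification would represent $\Lambda+\beta$ with $\Lambda$ the relative line class ($\int_\Lambda\omega_{\OP{FS}}=\pi$) and $\beta$ a disc class with $\int_\beta\omega_{\OP{FS}}\in\Z\pi r^2$; since Corollary~\ref{cor:index} provides a further, positive-area top-level component we would need $\pi 2/3\le\pi+k\pi r^2<\pi$ for an integer $k$, forcing $-1/(3r^2)\le k\le-1$, which is impossible as $r^2>1/3$.

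Next I would pin down the index, the fact that $A_\pt$ is a plane, and embeddedness. Genericity of $\pt$ forces $\OP{ind}(A_\pt)\ge2$, since a component (or its underlying simple curve) of index $\le1$ sweeps out at most a $3$-dimensional subset of $\CP^2\setminus L$ and hence avoids a generic point; by Lemma~\ref{lem:index}(1) a $d$-fold cover with $d\ge2$ would then have index $\ge 2d\ge4$, beyond the bound $3$ of Corollary~\ref{cor:index}, so $A_\pt$ is simply covered and, for generic $J_\infty$, embedded by adjunction and positivity of intersection exactly as in \cite{Dimitroglou:Isotopy}. With $\OP{ind}(A_\pt)\ge2$ we are in the second case of Corollary~\ref{cor:index}: exactly two planes at the top level, all other components (top or lower) cylinders, and $A_\pt$ a plane or a cylinder. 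A short area bookkeeping rules out the cylinder alternative and the presence of extra top-level cylinders — the unique $\ell_\infty$-meeting component has area $\pi+k\pi r^2$ with $-k$ the number of the remaining top-level components (each contributing at least $\pi r^2$ so that the total is $\pi$), and $-k\ge2$ would again violate $r^2>1/3$ — so there are precisely two top-level planes, one of them $A_\pt$. For a plane $\OP{ind}(A_\pt)=\mu(\overline{A_\pt})-1$ is odd; lying in $\{2,3\}$ it equals $3$, i.e.\ the Maslov index of $\overline{A_\pt}$ is $4$.

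Finally the area: since $A_\pt$ avoids $\ell_\infty$, $\overline{A_\pt}\in H_2(B^4,L)\cong H_1(L)$ has area $m\pi r^2$, and monotonicity together with positivity of the second top-level plane (total area $\pi$) gives $2/(3r^2)\le m<1/r^2$, hence $m\in\{1,2\}$ using $r^2>1/3$. Excluding $m=1$ is, I expect, the main obstacle: this case would make $A_\pt$ a \emph{minimal-area} Maslov-four plane and would force $r^2\ge2/3$, so the contradiction cannot be read off from $A_\pt$ alone but must come from a finer study of the whole building — in particular of its bottom-level curve in $T^*L$, whose structure is very rigid for the chosen flat metric, together with the second point of $L$ through which the building passes (alternatively, one invokes the fact that a Lagrangian torus contained in the open unit ball bounds a holomorphic disc of area less than $2\pi/3$). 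Once $m=1$ is ruled out we obtain $\int_{A_\pt}\omega_{\OP{FS}}=2\pi r^2$, and then $2\pi r^2<\pi$ yields $r<1/\sqrt2$, which together with the previous paragraphs gives the three stated properties.
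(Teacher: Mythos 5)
Your argument is essentially the paper's own proof: the same interplay between the monotonicity lower bound $\pi\,2/3$ on the area of $A_\pt$, the quantization of the areas of the $\ell_\infty$-disjoint top-level components in $\Z\pi r^2$ with $\pi r^2>\pi/3$, the fact that exactly one top-level component meets $\ell_\infty$ (once, transversally), and the index bounds of Corollary~\ref{cor:index} together with genericity of $\pt$ to force $\OP{ind}(A_\pt)\ge 2$, exclude branched covers, and land on an embedded plane of odd index equal to $3$. The one step you single out as ``the main obstacle'' --- ruling out that $A_\pt$ has area $\pi r^2$, which would force $r^2\ge 2/3$ --- is indeed the only place where an input beyond the bookkeeping is needed, and your parenthetical suggestion is the correct and complete way to close it: by \cite[Theorem 1.6]{Cieliebak:PuncturedHolomorphic} a Lagrangian torus in $\CP^2$ bounds a nonconstant holomorphic disc of symplectic area at most $\pi/3$, while every such area is a positive value of the action class and hence at least $\pi r^2$; thus $r^2\le 1/3$ before the rescaling and $r^2<2/3$ after it, so $m=1$ is impossible and the area is $2\pi r^2$. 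No ``finer study of the whole building'' is required, and in fact no purely combinatorial analysis of the building can exclude $m=1$, since a two-plane building with an $\ell_\infty$-disjoint plane of area $\pi r^2$ and index $3$ is topologically and index-theoretically consistent when $r^2\ge 2/3$. It is worth noting that the paper's proof simply asserts $k=2$ at this point without comment, so your flagging of the case $m=1$ correctly identifies a step the paper leaves implicit; with the Cieliebak--Mohnke input made explicit, your proof is complete.
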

\begin{proof}
Every broken pseudoholomorphic line must contain a plane that is disjoint from $\ell_\infty$ by the flatness of the metric on $L$ used in the construction of the neck-stretching sequence; see~\cite[Section 3]{Dimitroglou:Isotopy}. Since the punctured spheres inside $\CP^2 \setminus (\ell_\infty \cup L)$ are of symplectic area equal to $k\pi r^2 > k\pi/3$ for some $k =1,2,3,\ldots,$ by our assumptions on $L$, and since $A_\pt$ is of symplectic area at least $\pi 2/3$ by the above monotonicity argument, we conclude that $A_\pt$ is disjoint from $\ell_\infty$. In fact, we also conclude that its symplectic area is precisely equal to $2\pi r^2$ (i.e.~$k=2$), where thus $r<1/\sqrt{2}$ necessarily holds. Furthermore, there can be no other punctured spheres in the top level that are disjoint from $\ell_\infty$ except $A_{\OP{pt}}$, which implies that $A_{\OP{pt}}$ is a plane. (For these arguments we use the property that a sphere of degree one is of total symplectic area equal to $\pi$ and that every component in the top level of the building contributes positively to the symplectic area, while the other curves contribute zero to the symplectic area.)

The Fredholm index of $A_\pt$ is determined as follows. First we observe that $\OP{ind}(A_\pt)$ is odd and at most equal to three for a generic almost complex structure by Lemma \ref{lem:index} and Corollary \ref{cor:index} (to achieve genericity it suffices to perturb near $L$). Finally, since the point $\pt$ was chosen to be generic, we can assume that $A_\pt$ is of index at least two, and moreover not a branched cover of a plane of index one, as follows by a simple dimension count.
\end{proof}
Recall that the plane $A_{\OP{pt}}$ must be embedded by positivity of intersection \cite{McDuff:LocalBehaviour}, since it is not a branched cover.
\begin{lem}
\label{lem:simple}
The $J_\infty$-holomorphic plane $A_{\OP{pt}} \subset \CP^2 \setminus (\ell_\infty \cup L)$ of Fredholm index three (Maslov index four) produced by the above lemma has a simply covered asymptotic Reeb orbit.
\end{lem}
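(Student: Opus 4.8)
The plan is to argue by contradiction, so suppose that the asymptotic Reeb orbit of $A_{\OP{pt}}$ is the $m$-fold iterate of a simple Reeb orbit $\gamma$ on $UT^*L$ for some $m\ge 2$. Since the metric on $L$ used in the neck-stretching construction is flat, $\gamma$ is the Reeb lift of an \emph{embedded} closed geodesic, so its free homotopy class determines a primitive element $[\gamma]\in H_1(L;\Z)\cong\Z^2$, and the boundary of the compactified plane $\overline{A_{\OP{pt}}}$ is the closed geodesic on $L$ in the homology class $m[\gamma]$.

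First I would bound the covering multiplicity $m$ by a symplectic-action computation. By Lemma~\ref{lem:plane} the plane $A_{\OP{pt}}$ is disjoint from $\ell_\infty$, so $\overline{A_{\OP{pt}}}$ is a two-chain contained in $B^4=\CP^2\setminus\ell_\infty$. Applying Stokes' theorem to the primitive $\lambda_0$ of $\omega_0$ on $B^4$, the symplectic area of $A_{\OP{pt}}$ equals $\sigma_L(m[\gamma])=m\,\sigma_L([\gamma])$; on the other hand this area is $2\pi r^2$, again by Lemma~\ref{lem:plane}. Since $\sigma_L$ assumes the values $\Z\pi r^2$ on $H_1(L)$ we may write $\sigma_L([\gamma])=j\pi r^2$ with $j\in\Z$, so $mj=2$ with $j\ne 0$; as $m\ge 2$ this forces $m=2$ (and $j=1$).

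It then remains to exclude $m=2$, that is, an \emph{embedded} $J_\infty$-holomorphic plane of Fredholm index three asymptotic to a doubly covered Reeb orbit $\gamma^2$. For this I would invoke the relative adjunction (writhe) inequality for somewhere injective punctured pseudoholomorphic curves, in the form used in \cite[Section~3]{Dimitroglou:Isotopy}: for an embedded curve the relevant algebraic count of double points---interior ones together with the ``hidden'' ones at the asymptotic ends---vanishes, which forces a bound on the covering multiplicities of the asymptotic Reeb orbits in terms of the Euler characteristic, the Maslov index, and the Conley--Zehnder data of the orbits. For a genus-zero curve with a single puncture and $\mu(\overline{A_{\OP{pt}}})=4$ (i.e.\ Fredholm index three), and with the Morse--Bott orbits of the flat metric on $L$, this bound is violated when the asymptotic orbit is $\gamma^2$. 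Hence $m=1$.

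I expect the $m=2$ step to be the main obstacle: one has to run the writhe/adjunction bookkeeping carefully for the degenerate (Bott) Reeb orbits of the flat metric---choosing an appropriate trivialisation and the correct signs for the asymptotic winding numbers---so that the inequality really obstructs the \emph{genuinely embedded} double cover, and not merely the branched multiple covers, which were already removed in Lemma~\ref{lem:plane} by a dimension count that does not detect embedded ends of higher multiplicity. The reduction to $m\le 2$ is, by contrast, routine once disjointness from $\ell_\infty$ is in hand.
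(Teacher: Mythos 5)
Your reduction to $m\le 2$ is correct, and it is a genuine (small) simplification: disjointness from $\ell_\infty$ plus Stokes gives $\operatorname{area}(A_{\OP{pt}})=m\,\sigma_L([\gamma])=2\pi r^2$ with $\sigma_L([\gamma])\in\Z_{>0}\pi r^2$, hence $m\in\{1,2\}$. The gap is in the exclusion of $m=2$. You assert that the relative adjunction/writhe inequality is ``violated'' for an embedded one-punctured sphere of Maslov index four asymptotic to a doubly covered orbit, but you do not carry out the computation, and this is not something one can take for granted: the adjunction inequality does not in general forbid embedded curves from having multiply covered asymptotic ends (such curves are ubiquitous, e.g.\ throughout embedded contact homology), and in the Morse--Bott setting of the flat metric on $L$ the writhe bounds depend on asymptotic winding data and a perturbation scheme that you have not specified. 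As written, the decisive step is an unproved assertion, and it is not clear it can be made to work at all. A further soft issue: the embeddedness of $A_{\OP{pt}}$ that you feed into adjunction is itself only deduced from positivity of intersection once one knows the curve is not a branched cover, so it is not an independent a priori input.

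The paper's proof avoids adjunction entirely and handles all $m$ at once by a topological argument: one produces an auxiliary \emph{unbroken} $J_\infty$-holomorphic line $\ell$ meeting $A_{\OP{pt}}$, as the SFT-limit of $J_\tau$-lines satisfying a generic tangency condition at a generic point of $A_{\OP{pt}}$ (a dimension count via Corollary \ref{cor:index} forces the limit to be unbroken, and an area comparison rules out a branched cover of $A_{\OP{pt}}$ appearing in the limit). Since the connecting map $H_2(B^4,L)\xrightarrow{\delta} H_1(L)$ is an isomorphism, the relative class of $\overline{A_{\OP{pt}}}$ is divisible by the multiplicity $m$ of its asymptotic orbit, hence so is $A_{\OP{pt}}\bullet\ell$; positivity of intersection then gives $0<A_{\OP{pt}}\bullet\ell\le[\ell_\infty]\bullet[\ell_\infty]=1$, forcing $m=1$. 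If you want to keep your outline, replace the adjunction step by this intersection-number argument.
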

\begin{proof}
Consider a sequence of $J_\tau$-holomorphic lines which satisfy a generic tangency condition at a generic point $\OP{pt}' \in A_{\OP{pt}}$ as $\tau \to +\infty.$ Using the SFT compactness theorem, we can extract a limit holomorphic building from a convergent subsequence.

We first claim that the limit component is smooth at the point where the tangency is taken. Indeed, positivity of intersection implies that in some neighbourhood of the point $\OP{pt}',$ the underlying simply covered curve must be smooth; see \cite{McDuff:LocalBehaviour}. There is still the possibility that the building contains a multiple cover of $A_{\OP{pt}}$ branched at $\OP{pt}'$ (such a curve satisfies any prescribed tangency condition). This is however not possible, since the symplectic area of a line is equal to $\pi$ which is strictly less than $k\int_{A_{\OP{pt}}}\omega_0$ for any $k>1$.

Taking the tangency condition to be generic, a dimension count together with Corollary \ref{cor:index} implies that the limit in fact must be an \emph{unbroken} $J_\infty$-holomorphic line $\ell \subset \CP^2 \setminus L$ (i.e.~a pseudoholomorphic curve without punctures) that satisfies the tangency. (Any top component of a broken line comes in a moduli space of dimension at most three.) Since the connecting homomorphism
$$H_2(B^4,L)\xrightarrow{\delta} H_1(L)$$
is an isomorphism, we see that $A_{\OP{pt}} \bullet \ell$ is divisible by the multiplicity of the orbit. Positivity of intersection \cite{McDuff:LocalBehaviour} allows us to conclude that
$$0<A_{\OP{pt}} \bullet \ell \le [\ell_\infty] \bullet [\ell_\infty]=1,$$
which shows that this multiplicity is precisely equal to one.
\end{proof}

\subsection{A condition for Hamiltonian unknottedness}

\begin{figure}[htp]
\centering
\vspace{3mm}
\hspace{20mm}
\labellist
\pinlabel $\CP^2\setminus L$ at -17 57
\pinlabel $T^*L$ at -10 18
\pinlabel $\pt$ at 70 78
\pinlabel $A_\pt$ at 88 60
\pinlabel $C$ at 88 18
\pinlabel $B$ at 169 60
\pinlabel $D$ at 171 18
\pinlabel $B_\infty$ at 133 60
\pinlabel $A_\infty$ at 48 60
\pinlabel $\ell_\infty$ at 31 72
\pinlabel $\ell_\infty$ at 115 79
\pinlabel $3$ at 70 53
\pinlabel $1$ at 155 53
\pinlabel $1$ at 30 53
\pinlabel $3$ at 115 53
\pinlabel $\color{blue}\gamma$ at 42 12
\pinlabel $\color{blue}\gamma'$ at 126 12
\pinlabel $2$ at 65 12
\pinlabel $2$ at 150 12
\pinlabel $\color{red}\eta$ at 30 25
\pinlabel $\color{red}-\eta$ at 67 26
\pinlabel $\color{red}\eta'$ at 115 25
\pinlabel $\color{red}-\eta'$ at 152 25
\endlabellist
\includegraphics[scale=1.5]{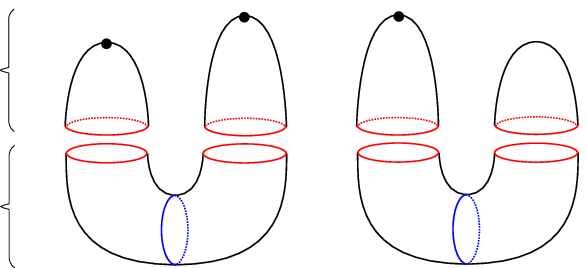}
\hspace{15mm}
\caption{The numbers indicate the Fredholm indices of the components (i.e.~dimension of the moduli space of the respective component without any asymptotic constraint in the Bott manifolds of periodic Reeb orbits). The asymptotic orbits are lifts of the geodesics on $L$ in the homology classes $\pm\eta \in H_1(L)$ and $\pm\eta' \in H_1(L)$. The bottom component are the complexifications of the unoriented closed geodesics $\gamma \subset L$ and $\gamma' \subset L$ in the two different homology classes.}
\label{fig:building}
\end{figure}

The monotonicity combined with Lemma \ref{lem:plane} now implies that the broken line produced in the previous subsection consists of precisely two components in its top level: the embedded plane $A_\pt$ together with an embedded plane $A_\infty$ that passes through $\ell_\infty$; both are simply covered and have simply covered asymptotics. Further, the classification of pseudoholomorphic cylinders in \cite[Section 4]{Dimitroglou:Isotopy} implies that the component in the bottom level is a standard cylinder; roughly speaking, these are the complexifications of the geodesic in the class $\pm \eta \in H_1(L)$ to which the planes are asymptotic. Even if the original broken line does not pass through $L,$ we can replace the cylinder in the bottom level with a cylinder that intersects $L$ cleanly precisely in the corresponding geodesic; such a configuration is shown on the left in Figure \ref{fig:building}.

Since the involved asymptotic orbits are simply covered by Lemma \ref{lem:simple}, the smoothing technique from~\cite[Section 5]{Dimitroglou:Isotopy} can then be used to produce a smoothing of the above building to an embedded symplectic sphere. Moreover, the resulting symplectic sphere intersects $L$ cleanly along the simply covered closed geodesic in class $\pm \eta \in H_1(L)$ to which the planes $A_\pt$ and $A_\infty$ are asymptotic. In other words, the assumptions of the below classification theorem is met, from which the existence of the sought Hamiltonian isotopy then follows.

\begin{thm}
\label{thm:IntersectingLine}
Consider a Lagrangian torus
$$L \subset (B^4,\omega_0)=(\CP^2\setminus \ell_\infty,\omega_{\OP{FS}})$$
for which there exists a tame almost complex structure $J$ on $\CP^2$ that is standard near $\ell_\infty$, and for which some $J$-holomorphic line $\ell$ has the property that $\ell \cap L$ is a simple closed curve $\gamma \subset L$ of Maslov index four (computed using the trivialisation of $TB^4$). Then $L$ is Hamiltonian isotopic inside $(B^4,\omega_0)$ to a product torus.
\end{thm}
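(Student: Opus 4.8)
The plan is to use the line $\ell$ together with its nearby $J$--holomorphic deformations to reconstruct the holomorphic pencil with respect to which $L$ is a ``matching torus'', and then to appeal to the uniqueness of such a configuration up to symplectomorphism; this is carried out in the technical framework of \cite[Sections~3--5]{Dimitroglou:Isotopy}, and indeed the hypothesis made here is exactly what the smoothing of a building of the type in Figure~\ref{fig:building} produces, so an essentially equivalent route is to neck-stretch around $L$ and analyse the resulting building directly. I would begin with the elementary intersection theory of the configuration: by positivity of intersection for $J$--holomorphic spheres the line $\ell$ meets $\ell_\infty$ transversally in a single point $p$, and $\gamma=\ell\cap L$ divides $\ell\cong\CP^1$ into an \emph{inner} disc $D_0\subset B^4$ with $\partial D_0=\gamma$ and $\mu(D_0)=4$, and an \emph{outer} disc $D_\infty\ni p$ with $\mu(D_\infty)=6-\mu(D_0)=2$. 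Since $\mu(D_0)\neq 0$ the embedded circle $\gamma$ is non-contractible in $L$, hence primitive in $H_1(L)$, so its parallel copies foliate $L$; and the adjunction formula for embedded holomorphic discs gives $[D_0]\cdot[D_0]=\mu(D_0)/2-1=1$ for the self-intersection number relative to the boundary framing induced by $L$.

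The heart of the argument is to let the boundary circle move. Fixing a foliation of $L$ by embedded circles $\gamma_s$, $s\in S^1$, all in the class $[\gamma]$, with $\gamma_0=\gamma$, I claim there is a continuous family of $J$--holomorphic lines $\ell^s$, $\ell^0=\ell$, for which $\ell^s\cap L=\gamma_s$ is a clean intersection. For $s$ near $0$ this is an implicit function theorem statement, the relevant moduli space being one-dimensional near $\ell$ and parametrised to first order by the motion of the boundary curve. Propagating the family over the whole circle is the step I expect to be the main obstacle. A degree-one class cannot decompose into classes of positive symplectic area, so no bubbling occurs and each $\ell^s$ remains an embedded line; what must be excluded is that the clean intersection with $L$ degenerates as $s$ varies — the circle $\gamma_s$ ceasing to be clean, or spurious additional intersection points appearing in a limit. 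This is controlled by positivity of intersection applied to $\ell^s$ against $L$, against $\ell_\infty$, and against the already-constructed members $\ell^{s'}$; here the identity $[D_0]\cdot[D_0]=1$ is essential, since it forces the $J$--holomorphic inner discs $D_0^s$ and $D_0^{s'}$ attached to disjoint circles $\gamma_s,\gamma_{s'}$ to meet in exactly one transverse point, which rigidifies the family. This is precisely the kind of positivity book-keeping developed in \cite[Section~3]{Dimitroglou:Isotopy}.

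Granting the family $\{\ell^s\}_{s\in S^1}$, the set $W=\bigcup_s D_0^s$ is a compact subset of $B^4$ with $\partial W=L$, foliated by $J$--holomorphic discs. Were their interiors pairwise disjoint, $W$ would be an embedded solid torus and $[D_0]\cdot[D_0]$ would vanish, contradicting the above; hence the discs meet, and a further positivity argument organises $\{\ell^s\}$ as a pencil through a common point $q_0\in B^4$, the members parametrised by a great circle in the base $\CP^1$ of which foliate $L$. Blowing up at $q_0$ turns this into a holomorphic ruling of a Hirzebruch surface with respect to which $L$ is an equatorial matching torus, standard near $\ell_\infty$ — exactly the model realised by a product torus $S^1(r)\times S^1(r)$ together with its pencil of complex lines through the corner of the moment triangle. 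Invoking the symplectic uniqueness of this ruled picture, in the manner of \cite[Sections~4--5]{Dimitroglou:Isotopy}, produces a symplectomorphism $(B^4,\omega_0)\xrightarrow{\ \cong\ }(B^4,\omega_0)$ carrying $L$ to $S^1(r)\times S^1(r)$; and since $H^1(B^4;\R)=0$ and the symplectomorphism group of the ball is connected (Gromov), every such symplectomorphism is generated by a Hamiltonian isotopy of $B^4$, which yields the assertion.
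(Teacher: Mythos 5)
Your overall strategy---deform $\ell$ to a family of lines meeting $L$ cleanly along a foliation, and reduce to a standard ruled model---is in the right spirit, but it is not what the paper does, and the two steps you yourself flag as delicate are genuine gaps that the paper's argument is specifically designed to avoid. First, the propagation of the family $\ell^s$ over all of $S^1$: there is no implicit-function-theorem statement that produces, for the \emph{original} almost complex structure $J$, a circle's worth of lines each intersecting $L$ cleanly in a circle. The condition ``$\ell^s\cap L$ is a clean circle'' is not an open or Fredholm condition on the $4$-dimensional space of $J$-lines, and a global such family foliating all of $L$ would essentially already exhibit $L$ as a matching torus, i.e.\ it presupposes the conclusion. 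The paper circumvents this entirely: it first normalises $\ell$ near $L$ (Lemma~\ref{lem:intersection}) so that it survives a neck-stretching sequence, and then produces, for the \emph{stretched} structure $J_\infty$, only a \emph{local} family of \emph{broken} lines whose bottom cylinders foliate an open annulus in $L$ (via automatic transversality for the two punctured planes, Lemma~\ref{lem:brokenlines}), together with one transverse broken line in the class $\pm\eta'$. Second, your claim that pairwise unique intersections of the discs $D_0^s$ force them to pass through a common point $q_0$ is false as stated: any two distinct $J$-lines in $\CP^2$ meet in exactly one point by positivity of intersection, yet three lines need not be concurrent. The ``pencil plus blow-up to a Hirzebruch surface'' picture therefore does not follow, and the ``uniqueness of the ruled picture'' you invoke at the end is precisely the hard classification input, not a formality.

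The paper's actual route is a reduction to the refined nearby-Lagrangian theorem for $T^*\T^2$ from \cite[Theorem~B]{Dimitroglou:Whitney}: it suffices to exhibit two $J$-holomorphic lines $\ell_1,\ell_2$ meeting $\ell_\infty$ in distinct points such that $L$ is homologically essential in $\CP^2\setminus(\ell_\infty\cup\ell_1\cup\ell_2)\cong\T^2\times\R^2$. These lines are extracted by Mohnke's trick: one takes SFT-limits of lines through a point of $A^t_{\OP{pt}}$ and a point of $B$ (respectively $B_\infty$), and rules out breaking by an intersection-number bookkeeping against both families of buildings, using $[\ell_\infty]\bullet[\ell_\infty]=1$. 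If you want to salvage your approach, you would need either to justify the global family $\{\ell^s\}$ and the concurrency claim (neither of which I see how to do directly), or to adopt the paper's reduction; note also that your final step (symplectomorphism of $B^4$ implies Hamiltonian isotopy in $B^4$ via connectedness of the symplectomorphism group of the open ball) is exactly the kind of statement this paper is careful about---see Corollary~\ref{cor:2}, which is derived from Theorems~\ref{thm:1} and~\ref{thm:2} rather than from any such connectivity assertion.
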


\subsection{The proof of Theorem \ref{thm:IntersectingLine}}
By the ``refined'' version of the nearby Lagrangian conjecture for the cotangent bundle $(T^*\T^2,d(p_1d\theta_1+p_2d\theta_2))$ of a torus established in~\cite[Theorem B]{Dimitroglou:Whitney} it suffices to find a Hamiltonian isotopy of $L$ supported inside $B^4$ that places the torus inside the subset
$$ (\CP^2 \setminus (\ell_\infty \cup \{z_1z_2=0\}),\omega_{\OP{FS}}) \cong (\T^2 \times U,d(p_1d\theta_1+p_2d\theta_2))$$
in a position which, moreover, makes it homologically essential inside the same neighbourhood. Here $z_i$ denote the standard affine coordinates on $\CP^2 \setminus \ell_\infty \cong \C^2,$ and
$$U =\{p_1+p_2 < \pi, \:\: p_1,p_2>0\} \subset \R^2$$
is an open convex subset.

To produce a Hamiltonian isotopy of $L$ to the sought position we will rely on the techniques from the proof of \cite[Lemma 5.7]{Dimitroglou:Whitney}, by which it suffices to find two $J$-holomorphic lines $\ell_i \subset \CP^2,$ $i=1,2,$ that intersect $\ell_\infty$ in two distinct points, and for which $L \subset \CP^2 \setminus (\ell_\infty \cup \ell_1 \cup \ell_2)$ is homologically essential. Namely, after a deformation near the nodes of $\ell_\infty \cup \ell_1 \cup \ell_2$ that can be performed by hand, one can show that there exists a Hamiltonian isotopy that fixes $\ell_\infty$ setwise while it takes the union $\ell_\infty \cup \ell_1 \cup \ell_2$ of $J$-holomorphic lines to the three standard lines $\ell_\infty \cup \{z_1z_2=0\}$.

In order to construct the $J$-holomorphic lines $\ell_i$, $i=1,2$, we need to again consider a neck-stretching sequence $J_\tau$ induced by a flat metric on $L.$ It will furthermore be crucial that:
\begin{itemize}
\item $J_\tau=i$ near $\ell_\infty,$ and
\item the line $\ell$, whose existence we are assuming, remains $J_\tau$-holomorphic for all $\tau \ge 0$.
\end{itemize}
In other words, we want $\ell$ to converge to a building as shown on the left in Figure \ref{fig:building} when taking the limit $\tau \to +\infty.$ We use $\pm\eta \in H_1(L)$ to denote the homology class of the unoriented simple closed curve $\gamma =L \cap \ell$ on $L$. The two planes in the top level of the limit building will be asymptotic to the two lifts of a geodesic which coincides with this closed curve for a suitable choice of flat metric on the torus.

To ensure that $J_\tau$ can be made to satisfy the second bullet point above, we need the following intermediate result.
\begin{lem}
\label{lem:intersection}
After a Hamiltonian isotopy supported in a small Weinstein neighbourhood of $L$, the line $\ell$ can be made to coincide with a ``complexified geodesic'' (i.e.~a $J_{\OP{std}}$-holomorphic cylinder explicitly described in \cite[Section 4]{Dimitroglou:Isotopy}) for the flat metric on $L$ inside some even smaller Weinstein neighbourhood.
\end{lem}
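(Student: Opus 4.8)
The plan is to localize the entire argument in a Weinstein neighbourhood of $L$ and to use the classification of $J_{\OP{std}}$-holomorphic cylinders in the cotangent bundle from \cite[Section 4]{Dimitroglou:Isotopy}. First I would pick a Weinstein neighbourhood $\nu(L) \cong D_\rho T^*L$ of radius $\rho$ small enough that the only intersection of $\ell$ with $\nu(L)$ that matters is the piece of $\ell$ that is forced to be there: by hypothesis $\ell \cap L = \gamma$ is a simple closed curve of Maslov index four. Since the connecting homomorphism $H_2(B^4,L) \xrightarrow{\delta} H_1(L)$ is an isomorphism and $\mu(\overline{\ell}) = 4$, the class $[\gamma] = \pm\eta \in H_1(L)$ is primitive and is the whole intersection class; I would argue (by positivity of intersection, \cite{McDuff:LocalBehaviour}) that after shrinking $\rho$ the curve $\ell$ meets $\nu(L)$ in an embedded annulus neighbourhood of $\gamma$ that is a graph over $\gamma$ in the normal direction, transverse to the zero section. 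The key local model is then that inside an even smaller neighbourhood, such an embedded annular piece meeting $L$ cleanly along a closed geodesic in a primitive class is, up to a Hamiltonian isotopy of the ambient $T^*L$ supported near $L$, exactly the ``complexified geodesic'' cylinder described in \cite[Section 4]{Dimitroglou:Isotopy} for a suitably chosen flat metric.

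The steps, in order, are: (i) choose the flat metric on $L$ so that the free homotopy class of $\gamma$ is realized by a closed geodesic — every primitive class on $\T^2$ contains a unique such geodesic for a flat metric, and one can choose the metric so that this geodesic is literally the curve $\gamma$; (ii) using the Weinstein neighbourhood theorem identify $\nu(L)$ with $D_\rho T^*\T^2$ carrying $d(p\,d\theta)$, under which $\gamma$ becomes $\{\theta_2 = 0,\ p = 0\}$ (after relabeling coordinates so $\eta$ is the first generator); (iii) observe that the piece of $\ell$ inside $\nu(L)$, being an embedded symplectic (indeed $J$-holomorphic) annulus transverse to the zero section along $\gamma$, is Lagrangian-isotopic rel a smaller neighbourhood to the standard cylinder $\{\theta_2 = 0\}$, i.e.\ the complexification of $\gamma$; this is the content one extracts from the normal form / Moser-type argument, using that both annuli are symplectic, agree along the core $\gamma$, and are embedded; (iv) upgrade the isotopy to a Hamiltonian isotopy of the ambient $B^4$ supported in $\nu(L)$, which is possible because the deformation is through annuli with fixed boundary behaviour and the relevant flux vanishes (the isotopy is supported in a ball-like region); (v) finally, shrink once more so that inside the yet smaller neighbourhood the image of $\ell$ agrees on the nose with the $J_{\OP{std}}$-holomorphic complexified geodesic of \cite[Section 4]{Dimitroglou:Isotopy}, adjusting $J_{\OP{std}}$ (equivalently the flat metric data) in that neighbourhood so that this cylinder is genuinely $J_{\OP{std}}$-holomorphic.

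The main obstacle I expect is step (iii)–(iv): making precise the sense in which \emph{any} embedded symplectic annulus meeting the zero section cleanly along a primitive closed geodesic can be normalized to the standard complexified geodesic by a compactly supported Hamiltonian isotopy. One has to control not just the $1$-jet of $\ell$ along $\gamma$ (which is governed by the clean-intersection hypothesis and transversality) but the behaviour of $\ell$ as it leaves the neighbourhood, and ensure the interpolation does not push $\ell$ outside $\nu(L)$ or create new intersections with $L$. I would handle this by a two-scale argument: first a $C^1$-small Hamiltonian isotopy straightening the normal $1$-jet of $\ell$ along $\gamma$ to match the model cylinder (possible since both are symplectic surfaces sharing the same core geodesic, so their symplectic normal bundles along $\gamma$ are isomorphic), then a further isotopy, supported in an annular collar of $\gamma$ and interpolating between the two graphs over $\gamma$ in the region where they are already $C^1$-close, using that the space of such graphs is contractible; positivity of intersection guarantees no spurious intersections with $L$ appear. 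The clean-intersection and primitivity hypotheses are exactly what make the normal bundle comparison and the flux computation work, so the lemma reduces to this essentially standard normal-form statement, whose careful verification is the only real work.
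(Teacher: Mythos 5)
Your proposal follows essentially the same route as the paper: first arrange, by an ambient Hamiltonian isotopy extending an isotopy of $L$ (equivalently, a suitable choice of the flat structure), that $\gamma=\ell\cap L$ is a closed geodesic, then normalise the $1$-jet of $\ell$ along $\gamma$ to that of the complexified geodesic, and finally make the two surfaces coincide in a yet smaller neighbourhood. The normal-form step that you flag as the ``only real work'' in (iii)--(iv) is exactly what the paper disposes of by observing that the space of linear symplectic two-planes meeting a fixed symplectic two-plane non-transversely is contractible, so your argument is correct and not genuinely different.
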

\begin{proof}
Recall the standard fact that any smooth isotopy of $L$ can be extended to an ambient Hamiltonian isotopy of its Weinstein neighbourhood $D_{\le \delta}T^*L.$ In this manner we can deform $\ell$ in order to make it intersect $L$ in a closed geodesic in the class $[\ell \cap L] \in H_1(L)$ for any choice of flat metric.

The subspace of linear symplectic two-planes inside $(\C^2,\omega_0)$ that intersect some fixed symplectic two-plane non-transversely is a contractible space. This fact can be readily used to deform $\ell$ in order to, first, make it tangent to the complexification of $\ell \cap L$ and, second, to make it even coincide with the complexification in some tiny neighbourhood.
\end{proof}
By the above we can assume that $\ell$ remains $J_\tau$-holomorphic for all $\tau \ge 0$ as sought, and they thus converge in the SFT-sense to a broken line as shown on the left in Figure \ref{fig:building} as $\tau\to+\infty$. We then combine this result with the existence of broken lines from \cite{Dimitroglou:Isotopy}.
\begin{lem}
\label{lem:brokenlines}
Under the assumption of the existence of the line $\ell$ as above, there is a stretched almost complex structure $J_\infty$ on $\CP^2 \setminus L$ which admits the following broken lines.

First, there is a one-parameter family of buildings parametrised by $t \in (-\epsilon,\epsilon)$ which consist of:
\begin{itemize}
\item {\bf Top level:} Two $J_\infty$-holomorphic planes $A^t_{\OP{pt}},A^t_\infty \subset \CP^2 \setminus L$, where $A^t_{\OP{pt}}$ is of index index 3, passes through $\pt \in B^4$, and is disjoint from $\ell_\infty$, while $A^t_\infty$ is of index 1 and intersects $\ell_\infty$ transversely in a single point; and
\item {\bf Bottom level:} A single $J_{\OP{std}}$-holomorphic cylinder $C^t$ which is the complexification of the geodesic $\gamma_t$.
\end{itemize}
In addition, the closed geodesics $\gamma_t$ are simply covered and provide a foliation the open annulus $\bigcup \gamma_t \subset L$.

Second, there is a building which consists of:
\begin{itemize}
\item {\bf Top level:} Two $J_\infty$-holomorphic planes $B,B_\infty \subset \CP^2 \setminus L$, where $B$ is of index index 1 and is disjoint from $\ell_\infty$, while $B_\infty$ is of index 3 and intersects $\ell_\infty$ transversely in a single point; and
\item {\bf Bottom level:} A single $J_{\OP{std}}$-holomorphic cylinder $D$ which is the complexification of the geodesic $\gamma'$.
\end{itemize}
In addition, the closed geodesic $\gamma'$ intersects each geodesic $\gamma_t$ transversely in a single point.

These two buildings are depicted in Figure \ref{fig:building}.
\end{lem}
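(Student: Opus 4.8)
The plan is to run the neck-stretching analysis of \cite[Section~3]{Dimitroglou:Isotopy} for a stretching sequence adapted to $\ell$ via Lemma~\ref{lem:intersection}. First I would fix, using Lemma~\ref{lem:intersection}, a neck-stretching sequence $J_\tau$ ($\tau\ge 0$) induced by a flat metric on $L$, with $J_\tau=i$ near $\ell_\infty$ and with $\ell$ remaining $J_\tau$-holomorphic throughout, and then pass to a generic limit $J_\infty$ on $\CP^2\setminus L$ (the perturbation achieving genericity being supported near $L$, away from $\ell$, so that Lemma~\ref{lem:index} and Corollary~\ref{cor:index} apply). Since near $L$ the line $\ell$ is a complexified geodesic over the simple closed curve $\gamma=\ell\cap L$, its SFT-limit is a building whose bottom level is the single cylinder equal to the complexification of $\gamma$, and whose top level consists of the two disc-halves of $\ell\setminus\gamma$: the half $\Delta$ disjoint from $\ell_\infty$ is a disc in $B^4$ bounding $\gamma$, hence $\mu(\overline{\Delta})=\mu(\gamma)=4$, so by the index formula $\OP{ind}(u)=k-2+\mu(\overline{u})$ it has index $3$; by Lemma~\ref{lem:index}(3) (with $\chi=0$ for the single cylinder) the other half has index $1$ and, being a piece of the embedded line $\ell$, meets $\ell_\infty$ transversely once. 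Thus $\ell$ degenerates to a building of the ``left'' type of Figure~\ref{fig:building}.

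For the one-parameter family, I would invoke the construction of \cite[Section~3 and Proposition~3.5]{Dimitroglou:Isotopy}: applied to the above stretching sequence and the generic point $\pt\in B^4$, taking SFT-limits of the $\CP^1$-family of $J_\tau$-lines through $\pt$ produces, for generic $J_\infty$, an open interval's worth of left-type broken lines through $\pt$, each consisting of an embedded, simply covered index-$3$ plane $A^t_\pt$ through $\pt$ disjoint from $\ell_\infty$, an index-$1$ plane $A^t_\infty$ meeting $\ell_\infty$ transversely once (the combinatorics forced exactly as for $\ell$ above, together with the area bookkeeping of Lemmas~\ref{lem:plane}--\ref{lem:simple}), and one bottom-level cylinder $C^t$ which, by the cylinder classification in \cite[Section~4]{Dimitroglou:Isotopy}, is the complexification of a simple closed geodesic $\gamma_t\subset L$ in a fixed primitive class $\beta\in H_1(L)$ with $\mu(\gamma_t)=4$. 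Transversality of the relevant moduli spaces, combined with positivity of intersection \cite{McDuff:LocalBehaviour} (two distinct simply covered planes through $\pt$ cannot be asymptotic to the same Reeb orbit), makes $t\mapsto\gamma_t$ an embedding of $(-\epsilon,\epsilon)\times S^1$ onto an open annulus in $L$, and the simple-coveredness of $\gamma_t$ is Lemma~\ref{lem:simple}.

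For the second building I would run the same extraction in a class transverse to $\beta$. Choosing the flat metric so that a primitive class $\beta'$ with $\beta\cdot\beta'=\pm1$ is represented by a simple closed geodesic $\gamma'$ (this is one of the two transverse foliating families of broken lines produced in \cite[Section~3]{Dimitroglou:Isotopy}), the limit building has bottom level the complexification $D$ of $\gamma'$ and two top-level planes $B,B_\infty$ with $\OP{ind}(B)+\OP{ind}(B_\infty)=4$. Since the building has total degree one, exactly one top component meets $\ell_\infty$; the component disjoint from $\ell_\infty$ bounds a disc in $B^4$ on $\gamma'$, hence has Maslov index $\mu(\gamma')=2$ and therefore index $1$, which forces $B$ (disjoint from $\ell_\infty$) to be the index-$1$ plane and $B_\infty$ (meeting $\ell_\infty$ transversely once) the index-$3$ one. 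Finally $\gamma'$ meets each $\gamma_t$ transversely in exactly $|\beta\cdot\beta'|=1$ point, since both are simple closed geodesics of the flat metric in the classes $\beta'$ and $\beta$.

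I expect the main obstacle to be not the production of individual buildings but pinning down the \emph{global} structure: that the moduli of index-$3$ planes through $\pt$ is a genuine interval whose geodesics foliate an embedded annulus (with no premature closing-up of the $\gamma_t$ and no boundary strata appearing), and that the index combinatorics come out exactly as stated for each of the two building types. As in \cite[Section~3]{Dimitroglou:Isotopy}, controlling this rests on the flatness of the metric (absence of contractible closed geodesics, which restricts the possible breakings) together with the area estimates of Lemmas~\ref{lem:plane}--\ref{lem:simple}; the genuinely new ingredient is Lemma~\ref{lem:intersection}, which keeps $\ell$ holomorphic throughout the stretching and thereby forces the classes $\beta,\beta'$ of the produced geodesics to be compatible with $\gamma=\ell\cap L$.
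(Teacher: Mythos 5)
Your extraction of the $t=0$ building as the SFT-limit of $\ell$ (after Lemma \ref{lem:intersection}) matches the paper, but the mechanism you use to produce the one-parameter family $t\in(-\epsilon,\epsilon)$ is where the argument has a genuine gap. You propose to obtain the family as SFT-limits of the $J_\tau$-lines through $\pt$, pinning down their combinatorics with ``the area bookkeeping of Lemmas \ref{lem:plane}--\ref{lem:simple}''. Those lemmas are not available here: they rest on the embedded ball $\varphi(B^4(e^\epsilon\sqrt{2/3}))$ and the action hypotheses on $L$ from Part (1), whereas Theorem \ref{thm:IntersectingLine} (of which Lemma \ref{lem:brokenlines} is a step) assumes neither --- its only input is the single line $\ell$ meeting $L$ in a Maslov-four circle. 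Even granting such area bounds, nothing in your argument forces the limits of lines through $\pt$ to break along geodesics in the class $\pm\eta=\pm[\ell\cap L]$: they could a priori be unbroken, or break in an unrelated class (indeed the second building is of a different type). The paper's mechanism is the missing idea: it applies Wendl's automatic transversality theorem \cite[Theorem 1]{Wendl:Automatic} to the two top-level planes of the $t=0$ building --- this is exactly why it matters that they are \emph{immersed planes of positive index} --- which yields a local family of planes of the same type asymptotic to the nearby flat geodesics $\gamma_t$ in the class $\pm\eta$, the bottom cylinders $C^t$ then being written down explicitly as complexified geodesics.

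Two smaller points. For the second building you assert that the plane disjoint from $\ell_\infty$ bounds a disc of Maslov index $2$ on $\gamma'$; under the hypotheses of Theorem \ref{thm:IntersectingLine} nothing is known about the Maslov class in directions other than $\eta$, so this does not determine which of $B,B_\infty$ has index $1$. The paper instead imports this building from \cite[Proposition 5.7]{Dimitroglou:Isotopy}, where it arises as a limit of lines with an additional generic point constraint near $\ell_\infty$; that constraint is what forces the component through $\ell_\infty$ to have index $3$. Finally, your claim that one may ``choose'' $\beta'$ with $\beta\cdot\beta'=\pm1$ inverts the logic: the class of $\gamma'$ is whatever the limit produces, and the paper derives that $\gamma'$ meets each $\gamma_t$ exactly once from positivity of intersection (non-collinearity of the classes forces at least one intersection, while $[\ell_\infty]\bullet[\ell_\infty]=1$ forbids more than one).
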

\begin{proof}
The building of the first kind for $t=0$ can be taken to be the SFT-limit of the $J_\tau$-holomorphic lines $\ell$ for $\tau \to +\infty$; here we rely on the previous Lemma \ref{lem:intersection}. The existence of the remaining buildings of the same type can be deduced by applying the automatic transversality theorem, which is due to C.~Wendl in the SFT setting \cite[Theorem 1]{Wendl:Automatic}, to the two $J_\infty$-holomorphic planes in the top level of the previously constructed building. To use automatic transversality it crucial that the two components in the top level are immersed planes of positive index. The family of cylinders in the bottom level of the building can then be constructed explicitly; see \cite[Section 4]{Dimitroglou:Isotopy}.

The existence of the building in the second bullet point was established in \cite[Proposition 5.7]{Dimitroglou:Isotopy} by considering a limit of lines with two point constraints: one point on $L$ and one point near $\ell_\infty$.

The intersection property of the geodesics $\gamma_t$ and $\gamma'$ is clear by positivity of intersection: they must intersect since their homology classes are not collinear, while $[\ell_\infty]\bullet[\ell_\infty]=1$ implies that they cannot intersect in more than one point.
\end{proof}

In the remainder of the proof we produce the sought pseudoholomorphic lines by following an idea due to K.~Mohnke \cite{Mohnke:Needle}: extract the sought lines by taking point constraints at suitable pseudoholomorphic planes, in order to achieve the required linking behaviour with $L$. Here it is crucial that both classes of buildings supplied by Lemma \ref{lem:brokenlines} exist; in general we are only guaranteed the second type of broken line described there.

To obtain the lines we will consider the limits of lines that that satisfy a suitable point constraint after stretching the neck. The key technical step is the following lemma.
\begin{lem}
Consider the SFT-limit of the unique $J_\tau$-holomorphic lines that pass through two generic points $\OP{pt}_1 \in A^{t}_{\OP{pt}}$ and $\OP{pt}_2 \in B \cup B_\infty$ as $\tau \to +\infty$. Then this limit is an \emph{unbroken} $J_\infty$-holomorphic line in $\CP^2 \setminus L$ which passes through the same two points.
\end{lem}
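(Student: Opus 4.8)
The plan is to analyze all possible SFT-limit buildings in the class of a line constrained to pass through $\OP{pt}_1 \in A^t_{\OP{pt}}$ and $\OP{pt}_2 \in B \cup B_\infty$, and to rule out every broken configuration by a combination of index counts, symplectic area considerations, and positivity of intersection against the known buildings of Lemma \ref{lem:brokenlines}. First I would invoke Corollary \ref{cor:index}: a broken line has at least two top-level planes, each top component has index in $\{0,1,2,3\}$, and any two point constraints together cost dimension four, forcing a sharp balance. Since $\OP{pt}_1$ and $\OP{pt}_2$ are generic, each lies on a component of the limit that is \emph{not} a branched cover through that point — exactly as in the proof of Lemma \ref{lem:simple}, the symplectic area bound (a line has area $\pi$, strictly less than $k$ times the area of $A^t_{\OP{pt}}$ or of $B\cup B_\infty$ for $k>1$) forbids a multiple cover absorbing a point constraint. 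So the constraint at $\OP{pt}_1$ is taken by a genuinely $2$-dimensional (after the constraint, $0$-dimensional) family of simply-covered top components, and similarly at $\OP{pt}_2$.

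The heart of the argument is then the following: the component $C_1$ of the limit through $\OP{pt}_1$ must, by positivity of intersection with the known building through $A^t_{\OP{pt}}$ (or by the area/Maslov bookkeeping forcing it to be a plane asymptotic to the lift of $\gamma_t$), be forced to ``be'' (a plane Hamiltonian-close to) $A^t_{\OP{pt}}$ — i.e. its boundary class is $\pm\eta$ and it links $L$ the same way; likewise the component $C_2$ through $\OP{pt}_2$ is forced into the $B$/$B_\infty$ configuration with boundary class $\pm\eta'$. But $\gamma_t$ and $\gamma'$ have non-collinear homology classes in $H_1(L)$, so the asymptotic Reeb orbits of $C_1$ and $C_2$ cannot be glued to each other through cylinders in the symplectisation (a trivial or nontrivial cylinder preserves the homology class of the orbit, and there are no pseudoholomorphic pieces joining orbits in distinct primitive classes, by the flatness of the metric — no contractible geodesics, and the classification of cylinders in \cite[Section 4]{Dimitroglou:Isotopy}). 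Hence a building containing both $C_1$ and $C_2$ as genuine punctured pieces cannot close up to a degree-one sphere. The only escape is that $C_1=C_2$ is a single \emph{unpunctured} top-level component carrying \emph{both} constraints — that is, an unbroken $J_\infty$-holomorphic line — which is exactly the conclusion.

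To make this airtight I would first exclude that $\OP{pt}_1$ and $\OP{pt}_2$ are absorbed by the \emph{same} punctured plane: such a plane has index $3$ at most (Lemma \ref{lem:index}(2), Corollary \ref{cor:index}), and after two generic point constraints the expected dimension drops to $3-4<0$, so for generic $J_\infty$ this does not occur. Thus if the limit is broken it has at least two distinct top components carrying the two constraints, and the linking/homology-class analysis above applies verbatim, producing the contradiction. The one unbroken sphere through two generic points persists for all $\tau$ by Gromov \cite{Gromov:Pseudo} and is embedded, so the limit — being unbroken — is precisely such a line. The main obstacle I anticipate is the bookkeeping that pins $C_1$ down to the class $\pm\eta$ and $C_2$ to $\pm\eta'$: one must check that \emph{every} simply-covered index-$\le 3$ plane disjoint from $\ell_\infty$ (resp. meeting $\ell_\infty$) that passes through the generic point on $A^t_{\OP{pt}}$ (resp. on $B\cup B_\infty$) has the claimed boundary class. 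This follows from positivity of intersection with $A^t_{\OP{pt}}$ and with $B$, $B_\infty$ — any new plane through $\OP{pt}_1$ meeting $A^t_{\OP{pt}}$ transversally at that point must, since $\delta\colon H_2(B^4,L)\to H_1(L)$ is an isomorphism and the intersection number is bounded by $[\ell_\infty]\bullet[\ell_\infty]=1$ exactly as in Lemma \ref{lem:simple}, share the asymptotic orbit of $A^t_{\OP{pt}}$ — and the remaining area/index constraints then leave no room for extra components of nonzero boundary class.
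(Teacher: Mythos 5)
Your proposal is correct and follows essentially the same route as the paper: genericity of the point constraints plus Corollary \ref{cor:index} to force a simply-covered high-index top component, then positivity of intersection against \emph{both} families of buildings from Lemma \ref{lem:brokenlines} (using the classification of cylinders and their intersection numbers, and the bound $[\ell_\infty]\bullet[\ell_\infty]=1$, with the case that the limit contains $A^t_{\OP{pt}}$ or $B,B_\infty$ themselves handled separately) to pin down the asymptotic classes. The only cosmetic difference is the endgame: you conclude by noting that planes with asymptotics collinear with $\eta$ and with $\eta'$ cannot be joined by cylinders into a single building, whereas the paper derives the same contradiction as an excess intersection number $\ge 2$ with the second building; be aware that your intermediate phrase ``a plane meeting $A^t_{\OP{pt}}$ once must share its asymptotic orbit'' is really a statement about the lower-level cylinders not intersecting $C^t$ and $D$, exactly as in \cite[Corollary 4.3]{Dimitroglou:Isotopy}.
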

\begin{proof}

The limit building is a (possibly unbroken) line which intersects the planes $A^{t'}_{\OP{pt}}$ in a discrete nonempty subset for any $t'$ which is sufficiently close to $t$. Indeed, either the limit intersects $A^{t}_{\OP{pt}}$ in a discrete nonempty subset, in which case the statement follows from positivity of intersection, or the limit building contains a (possibly trivial) multiple cover of the plane $A^{t}_{\OP{pt}}$ itself as a component in its top level. In the second case we need to use the fact from Lemma \ref{lem:brokenlines} that $A^{t}_{\OP{pt}} \cap A^{t'}_{\OP{pt}}=\{\OP{pt}\}$.

If the point $\OP{pt}_1 \in A^{t}_{\OP{pt}}$ is chosen generically, then we can further use Corollary \ref{cor:index} to deduce that the top level component of the limit building that passes through $\OP{pt}_1$ must have an underlying simply covered punctured sphere whose index is at least 2. (A generic such point constraint is of codimension two, and if the component is equal to $A^t_{\OP{pt}}$ itself then the statement is automatically true.)

If the limit is a broken line, then it consists of precisely two planes of positive index together with a number of cylinders in its top, middle, and bottom levels by Corollary \ref{cor:index}. The classification of $J_{\OP{std}}$-holomorphic cylinders from \cite[Section 4]{Dimitroglou:Isotopy} and the computations of their intersection numbers (see Corollary 4.3 therein) implies that middle and bottom levels of the cylinder must have asymptotics in the classes $\pm k\eta \in H_1(L)$ for some $k > 0$. (If not, this broken line would intersect the broken lines of the first kind from Lemma \ref{lem:brokenlines} in more than two points: at least once at $\OP{pt} \in \CP^2 \setminus L$ in the top level, and at least once arising from intersections of cylinders in the middle and bottom levels.)

If the limit line is broken, then by examining the asymptotic orbits of the punctured spheres in the middle and bottom level of the building (these were shown to all be geodesics in the homology classes $\pm k\eta$), we conclude that the building does not contain any of the two planes $B$ and $B_\infty$, nor any branched cover of these. In particular, the limit building intersects the union $B \cup B_\infty$ of planes in a discrete nonempty subset.

However, if the building in fact is broken, then we again arrive at a contradiction by using positivity of intersection. Indeed, the limit building contains a cylinder in its middle and bottom level with asymptotics in homology classes of the form $\pm k\eta$ by the above and, hence, it must intersect the cylinder $D$ in the second building of Lemma \ref{lem:brokenlines} by \cite[Corollary 4.3]{Dimitroglou:Isotopy}. On the other hand, it also intersects $B \cup B_\infty$ in a discrete nonempty subset, so these two buildings intersect with a total algebraic intersection number at least 2, which contradicts $[\ell_\infty]\bullet[\ell_\infty]=1$.
\end{proof}

The above lemma now provides us with the needed pseudoholomorphic lines $\ell_1$ and $\ell_2$. More precisely, the line $\ell_1$ can be taken to be an unbroken $J_\infty$-holomorphic line which passes through the two planes $A^{t}_{\OP{pt}}$ and $B$, while $\ell_2$ can be taken to be the line which passes through the two planes $A^{t}_{\OP{pt}}$ and $B_\infty$. The sought linking properties of $\ell_i$ and $L$ now readily follow from positivity of intersection; these unbroken lines cannot pass through the two planes $B$ and $B_\infty$ simultaneously. The obtained configuration of lines is shown schematically in Figure \ref{fig:building2}.

For suitable point constraints, the two lines $\ell_i$ produced above moreover intersect in a point disjoint from $\ell_\infty.$ We have thus managed to produce the lines in the sought position, where the previously established linking properties imply that $L \subset \CP^2 \setminus (\ell_1 \cup \ell_2 \cup \ell_\infty)$ is homologically essential as needed. (The latter complement of three lines is diffeomorphic to $\T^2 \times \R^2$.)
\qed

\begin{figure}[htp]
\centering
\vspace{3mm}
\hspace{20mm}
\labellist
\pinlabel $\CP^2\setminus L$ at -17 57
\pinlabel $T^*L$ at -10 18
\pinlabel $\pt$ at 70 78
\pinlabel $A^t_\pt$ at 88 60
\pinlabel $C$ at 88 18
\pinlabel $D$ at 168 18
\pinlabel $B_\infty$ at 130 65
\pinlabel $A^t_\infty$ at 48 60
\pinlabel $\ell_\infty$ at 31 72
\pinlabel $\ell_1$ at 114 83
\pinlabel $\ell_\infty$ at 114 66
\pinlabel $\ell_2$ at 92 43
\pinlabel $B$ at 153 53
\pinlabel $\color{red}\eta$ at 30 25
\pinlabel $\color{red}-\eta$ at 67 26
\pinlabel $\color{red}\eta'$ at 115 25
\pinlabel $\color{red}-\eta'$ at 152 25
\endlabellist
\includegraphics[scale=1.5]{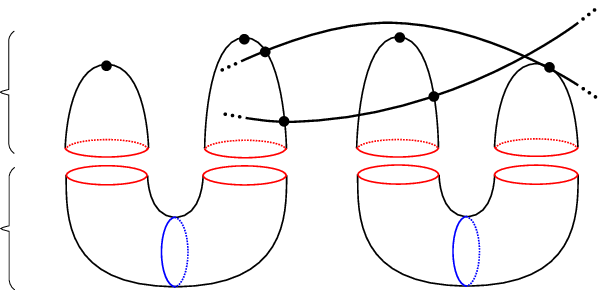}
\hspace{15mm}
\caption{Two unbroken pseudoholomorphic lines $\ell_1$ and $\ell_2$ which pass through the different planes of the two buildings supplied by Lemma \ref{lem:brokenlines}. For the particular configuration shown it follows that $L$ is homologically essential in the complement of $\ell_1 \cup \ell_2 \cup \ell_\infty$.}
\label{fig:building2}
\end{figure}

\section{Proof of Part (2) of Theorem \ref{thm:1}}

It may be possible that one can prove Part (2) by reusing the analysis done in Part (1); however, we here choose to perform a slightly different neck-stretching argument.

First we recall the classical result \cite[Corollary 1.5]{McDuff:FromSymplectic} by D.~McDuff which in particular says that any two symplectic embeddings of closed balls of the same radius into $\CP^2$ are Hamiltonian isotopic. From this it suffices to establish the following proposition, and then use the classification of Lagrangian tori inside a round sphere (see e.g.~\cite[Section 3]{Dimitroglou:Extremal}) which follows from elementary techniques.
\begin{prop}
Any monotone torus $L \subset \CP^2$ which is disjoint from $B^4(\sqrt{2/3}) \subset \CP^2$ is contained entirely inside $S^3(\sqrt{2/3})$.
\end{prop}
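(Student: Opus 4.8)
The plan is to run a neck-stretching argument along a spherical normal bundle of $L$ with respect to a flat metric, exactly in the spirit of Section 2, but now taking the point constraint inside the embedded ball $B^4(\sqrt{2/3}) \subset \CP^2$ whose existence is assumed. Let $\varphi \colon (B^4(\sqrt{2/3}),\omega_0) \hookrightarrow \CP^2 \setminus L$ be the disjoint embedding. First I would choose a generic point $\pt$ near the centre of $\varphi(B^4(\sqrt{2/3}))$ and, for each $\tau$, take the unique $J_\tau$-holomorphic line through $\pt$ and a second fixed point on $L$, where the $J_\tau$ are chosen to agree with $\varphi_* i$ on a slightly smaller concentric ball (as in property \ref{acs}) and to be cylindrical along the neck near $L$. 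Monotonicity of symplectic area (Sikorav) applied to $\varphi(B^4_\pt(\sqrt{2/3}))$ then forces the top-level component $A_\pt$ of the SFT-limit through $\pt$ to have $\int_{A_\pt}\omega_{\OP{FS}} \ge \pi 2/3$; since a line has total area $\pi$, the component $A_\pt$ is unique and carries almost all the area.

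The key point is that monotonicity of $L$ in $\CP^2$ pins down the area of $A_\pt$ very rigidly. Because $L$ is monotone in $\CP^2$, the symplectic areas of punctured spheres in $\CP^2 \setminus L$ with boundary on $L$ lie in a discrete set determined by the Maslov class and the monotonicity constant; with the normalisation that a line has area $\pi$, a monotone torus in $\CP^2$ has its minimal positive area class equal to $\pi/3$ and the area of $A_\pt$ must be an integer multiple of $\pi/3$ that is both $\ge \pi 2/3$ and $\le \pi$. Hence $\int_{A_\pt}\omega_{\OP{FS}}$ is exactly $\pi 2/3$, so $A_\pt$ is a Maslov-two plane of area exactly $\pi 2/3$, it is disjoint from $\ell_\infty$ (the remaining area $\pi/3$ being just enough for the complementary plane through $\ell_\infty$), and the whole building through $\pt$ consists of these two planes glued along a single cylinder, as in Lemma~\ref{lem:plane} and the discussion around Figure~\ref{fig:building}. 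The crucial new inequality is the \emph{reverse} direction: monotonicity in the full ball $B^4(\sqrt{2/3})$, i.e.\ the fact that the monotonicity argument would already fail if $L$ dipped inside the ball, so the very existence of such a Maslov-two plane of minimal area $\pi 2/3$ whose boundary geodesic is traced out cleanly on $L$ means $L$ is "maximally squeezed" against $S^3(\sqrt{2/3})$.

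From here I would argue by a monotonicity/area count that $L$ must lie in the boundary sphere. Sweeping $\pt$ over a family of generic points filling $\varphi(B^4(\sqrt{2/3}))$ produces a family of such Maslov-two planes $A_\pt$, each of area exactly $\pi 2/3$ and each disjoint from $\ell_\infty$; on the other hand, each such plane together with its complementary plane through $\ell_\infty$ reconstitutes a line, and the line through $\pt$ and a variable second point on $L$ sweeps out (after stretching back) a neighbourhood of $L$. Since the area available to the part of this line lying in the region bounded between $\varphi(S^3(\sqrt{2/3}))$ and $L$ is squeezed to zero (the plane $A_\pt$ has used up the full $\pi 2/3$ already accounted for by the ball, and the part near $L$ contributes the discrete residue), the Lagrangian $L$ can have no points strictly outside $\overline{\varphi(B^4(\sqrt{2/3}))}$ either — any such excursion would create a punctured sphere of strictly positive extra area, pushing the total above $\pi$. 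Combined with the hypothesis $L \cap B^4(\sqrt{2/3}) = \emptyset$, this forces $L \subset S^3(\sqrt{2/3})$, which is the claim.

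The main obstacle I expect is making the last step — the passage from "there is a minimal-area Maslov-two plane cleanly meeting $L$" to "$L$ is entirely contained in the sphere" — genuinely rigorous rather than heuristic; this is really an extremality statement in the sense of Cieliebak--Mohnke and \cite{Dimitroglou:Extremal}, and the honest argument is likely to invoke that theorem (which the corollary following Theorem~\ref{thm:knotted} already cites) rather than to re-derive it. A secondary technical point is transversality and the breaking analysis for the stretched almost complex structure when the point constraint sits inside $\varphi(B^4(\sqrt{2/3}))$ rather than near $\ell_\infty$, but this is handled verbatim by Lemma~\ref{lem:index} and Corollary~\ref{cor:index} together with the flatness of the metric on $L$, exactly as in Section~2. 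So the real content is the sharp area bookkeeping forced by monotonicity in $\CP^2$ together with the Gromov-width constraint of \cite[Theorem 1.6]{Cieliebak:PuncturedHolomorphic}.
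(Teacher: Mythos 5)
Your first half matches the paper's opening moves: stretch the neck around $L$ with the almost complex structure kept standard on a slightly shrunk ball, take the limit of lines through a generic point $\pt$ near the centre, and use Sikorav monotonicity plus monotonicity of $L$ in $\CP^2$ to pin down a top-level plane $A_{\pt}$ of index $3$ and area exactly $\pi 2/3$. (One small discrepancy: in this setting the paper explicitly does \emph{not} assume $\ell_\infty$ is holomorphic, so no claim is made about $A_{\pt}$ being disjoint from $\ell_\infty$; this turns out not to matter because simple coveredness of the asymptotic is not needed here.) But the passage from the existence of these planes to the containment $L \subset S^3(\sqrt{2/3})$ is a genuine gap, and you have correctly sensed it yourself. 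Your stated mechanism --- ``any excursion of $L$ outside the closed ball would create a punctured sphere of strictly positive extra area'' --- is exactly the assertion that needs proof: nothing forces a holomorphic curve to visit the region where $L$ allegedly sticks out. Your proposed fallback of citing \cite[Theorem 1.2]{Dimitroglou:Extremal} also does not apply as stated, since that theorem concerns an extremal torus contained \emph{inside} a closed ball, whereas here $L$ lies in the \emph{complement} of the ball in $\CP^2$; the paper says explicitly that it reuses the idea of that proof in a different geometric setting rather than the statement.

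The missing mechanism is a linking argument. Assuming for contradiction that $L$ has a point strictly outside $D^4(\sqrt{2/3})$, one places a small spherical cotangent fibre $S_0 \subset \CP^2 \setminus (D^4(\sqrt{2/3}) \cup L)$ in a thin Weinstein neighbourhood of $L$ near that point; since $L$ is nullhomologous, $S_0$ links $L$ nontrivially. One then sweeps not the point $\pt$ but the \emph{one-dimensional moduli space} of index-$3$ planes through the fixed $\pt$: evaluation of their boundaries produces a null-homology of $L$, so by linking some plane $A$ in the family must pass through $S_0$. Monotonicity of area applied at $\pt$ (inside the ball, where the structure is standard) gives $\int_{A \cap D^4(\sqrt{2/3-\delta})}\omega_{\OP{FS}} \ge \pi(2/3-2\delta)$, while monotonicity applied at $S_0$ (outside the ball) gives an additional contribution $\ge \hbar > 0$ with $\hbar$ independent of $\delta$; for $\delta < \hbar/(2\pi)$ this exceeds the total area $\pi 2/3$ of the plane, a contradiction. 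Without the circle $S_0$, the null-homology swept by the moduli space, and this two-location area count, the argument does not close.
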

\begin{proof}
The proof is based upon the same idea as the proof of the main result in \cite{Dimitroglou:Extremal} by the author, but where the null-homology of $L$ is given by a neck-stretching argument in a slightly different geometric setting. 

Assume the contrary, i.e.~that $L$ is not contained entirely inside $S^3(\sqrt{2/3})$ (in fact, in view of \cite[Theorem 1.6]{Cieliebak:PuncturedHolomorphic}, the torus must at least touch this sphere). As in \cite[Section 4]{Dimitroglou:Extremal} we can then construct a small spherical cotangent fibre
$$S^1 \cong S_0 \subset \CP^2 \setminus (D^4(\sqrt{2/3}) \cup L)$$
contained inside a thin Weinstein neighbourhood of $L$, which thus has the feature that it links $L \subset \CP^2$ nontrivially (recall that $L$ is nullhomologous). From now on the choice of $S_0$ will remain fixed. In addition, for any class of almost complex structures that are fixed in some given neighbourhood of $S_0$, we can now infer that there exists a constant $\hbar>0$ such that:
\begin{enumerate}[label={(M)}, ref={(M)}]
\item Any punctured pseudoholomorphic sphere $A \subset \CP^2 \setminus L$ that passes through $S_0$ satisfies the bound
$$\int_{A \setminus D^4(\sqrt{2/3})}\omega_{\OP{FS}} \ge \hbar>0$$
on its symplectic area; \label{M}
\end{enumerate}
this follows from the monotonicity property \cite{Sikorav:SomeProperties} precisely as \cite[(M)]{Dimitroglou:Extremal}.

We stretch the neck around $L$ while keeping the almost complex structure standard inside $D^4(\sqrt{2/3-\delta})$ as well as in the above neighbourhood of $S_0$. Neck stretching works as described in Section \ref{sec:neckstretch}, with the only difference that we cannot assume that $\ell_\infty$ is holomorphic. The crucial feature of our setup is that we can stretch the neck while keeping the constant $\hbar>0$ above fixed, and simultaneously taking $\delta>0$ to be arbitrarily small.

The same analysis is the proof of Part (1) of Theorem \ref{thm:1} implies that the broken lines which pass through a generic point $\OP{pt} \in D^4(\sqrt{2/3}-\delta)$ close to $0$ must be of area strictly greater than $\pi/3$, and hence consist of a plane $A_{\OP{pt}}$ of index 3 which passes through $\OP{pt}$, an additional plane in the top level of index 1, together with a single cylinder in the bottom component. Here the monotonicity of $L$ significantly simplifies the analysis of the possible breakings. (It does not make sense to make any claims about whether $A_{\OP{pt}}$ is disjoint from $\ell_\infty$ or not, since the latter sphere is not necessarily holomorphic. This is not a problem, since we do not need to show that $A_{\OP{pt}}$ has a simply covered asymptotic.)

The remainder of the proof is the same as in \cite[Section 4.3]{Dimitroglou:Extremal}. We construct a null-homology of $L \subset \CP^2$ by evaluation from the one-dimensional component of the moduli space of planes of index 3 which pass through the point $\OP{pt} \in D^4(\sqrt{2/3-\delta})$ and which contains the plane $A_{\OP{pt}}$. (After a blow-up we can consider the strict transform of the moduli problem, which instead concerns planes of index 1 that intersect the exceptional divisor.) Here it is important that this component of the moduli space is compact, which is a consequence of the monotonicity of $L$ together with the genericity of the choice of point $\OP{pt} \in D^4(\sqrt{2/3}-\delta)$.

Note that the aforementioned planes all have symplectic area $\pi2/3$ by the monotonicity of $L$, while the area of the same planes concentrated inside the ball $D^4(\sqrt{2/3-\delta})$ is equal to at least
$$\int_{A_{\OP{pt}} \cap D^4(\sqrt{2/3-\delta})}\ge \pi(2/3-2\delta)$$
by the monotonicity property for symplectic area \cite{Sikorav:SomeProperties}, while using the assumption that the almost complex structure is standard inside $D^4(\sqrt{2/3-\delta})$.

Since $L$ and $S_0$ are nontrivially linked, some of these planes that constitute the null-homology of $L$ are forced to pass through the subset $S_0$, and in view of \ref{M} their symplectic areas when intersected with $\CP^2 \setminus B^4(\sqrt{2/3}-\delta)$ must hence be bounded from below by $\hbar>0$. Since $\delta>0$ can be taken arbitrarily small in the above construction, we finally arrive at the sought contradiction after choosing $0<\delta<\hbar/(2\pi)$.
\end{proof}

\section{Proof of Theorem \ref{thm:2}}
\label{sec:ProofThm2}

\begin{figure}[htp]
\centering
\vspace{3mm}
\labellist
\pinlabel $1$ at 100 -8
\pinlabel $u_1/\pi$ at 124 3
\pinlabel $u_2/\pi$ at 4 118
\pinlabel $u_2/\pi$ at 160 118
\pinlabel $u_1/\pi$ at 280 3
\pinlabel $1-r$ at 75 -8
\pinlabel $1$ at 255 -8
\pinlabel $1/3$ at 190 -8
\pinlabel $1/3$ at -11 35
\pinlabel $1/2$ at -11 51
\pinlabel $1-r$ at -15 84
\pinlabel $1$ at 148 100
\pinlabel $1/3$ at 142 35
\pinlabel $r$ at 148 19
\pinlabel $1/2$ at 142 51
\pinlabel $1-r$ at 140 84
\pinlabel $1$ at -5 100
\pinlabel $1/3$ at 35 -8
\pinlabel $1-r$ at 230 -8
\pinlabel $P_2$ at 49 20
\pinlabel $\color{red}L_{\OP{Cl}}$ at 30 28
\pinlabel $\color{blue}L$ at 178 52
\endlabellist
\includegraphics[scale=1]{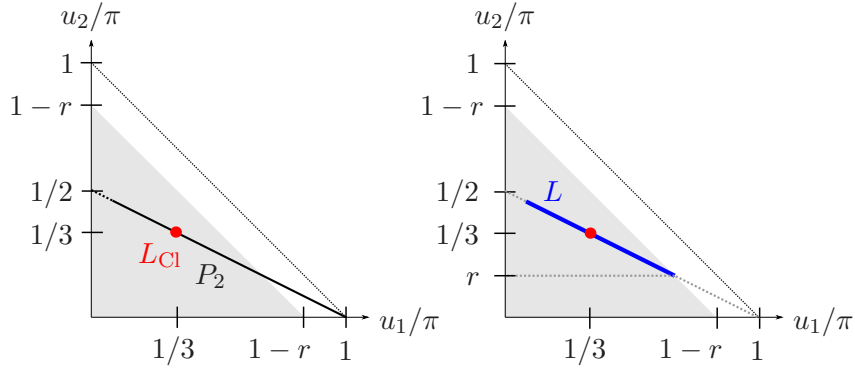}
\vspace{3mm}
\caption{The monotone Clifford torus $L_{\OP{Cl}}=S^1(1/\sqrt{3}) \times S^1(1/\sqrt{3})$ can be isotoped to $L$ inside the probe $P_2 \subset D^4.$ We can moreover place $L$ inside $B^4(\sqrt{1-r})$ for any $r\in(0,1/6).$}
\label{fig:probe1}
\end{figure}

\begin{figure}[htp]
\centering
\vspace{3mm}
\labellist
\pinlabel $\color{red}\gamma_0$ at 152 58
\pinlabel $\color{blue}\gamma_1$ at 163 153
\pinlabel $\sqrt{\frac{1}{3}}$ at 161 104
\pinlabel $\sqrt{\frac{1}{2}}$ at 193 103
\pinlabel $\sqrt{r}$ at 124 100
\pinlabel $x_1$ at 210 91
\pinlabel $iy_1$ at 101 201
\pinlabel $\sqrt{\frac{1}{2}}$ at 115 184
\endlabellist
\includegraphics{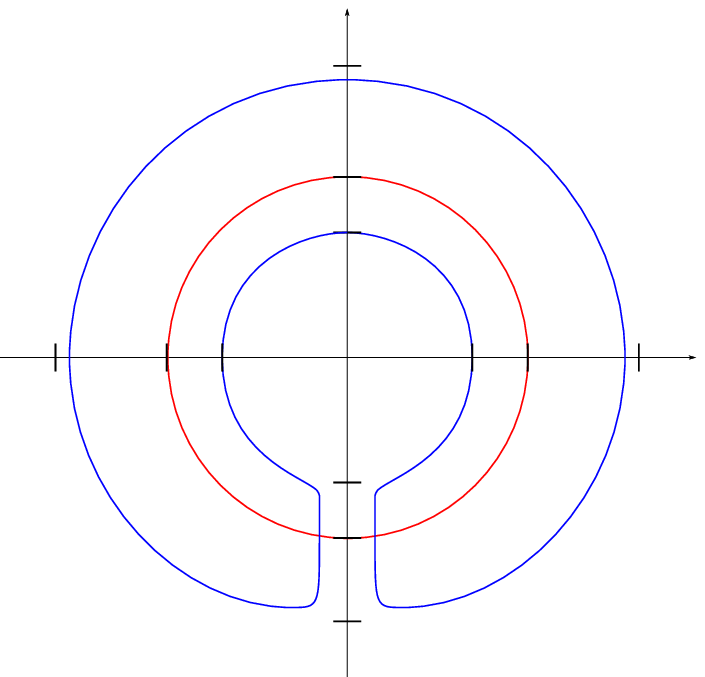}
\caption{Inside the probe $P_2,$ consider a Hamiltonian isotopy of the form $\psi_2(\{\gamma_t\} \times S^1),$ where $\psi_2(\{\gamma_0\} \times S^1)$ is the monotone product torus.}
\label{fig:probe2}
\end{figure}

Probes are a useful tool for constructing Hamiltonian isotopies that was invented by McDuff in~\cite{McDuff:Probes}. For any integer $m=1,2,3,\ldots$ we consider the probe
$$P_{m}:=\mu^{-1}\{u_1=\pi-m u_2,\:\: u_1>0\} \subset \C^2$$
for the standard momentum map
\begin{gather*}
\mu \colon \C^2 \to (\R_{\ge 0})^2,\\
\mu(z_1,z_2):=(\pi\|z_1\|^2,\pi\|z_2\|^2),
\end{gather*}
on $(\C^2,\omega_0).$ We will consider the foliation
\begin{gather*}
\psi_m \colon B^2(1/\sqrt{m}) \times S^1 \xrightarrow{\cong} P_m \subset \C^* \times \C,\\
((r,\theta),\varphi) \mapsto ((\sqrt{1-mr^2},\varphi), (r,\theta+m\varphi)),
\end{gather*}
by symplectic discs $B^2(1/\sqrt{m}) \times \{\varphi\}$ in local angular coordinates. Note that this map indeed extends smoothly over $\{ 0 \} \times S^1.$ Since the symplectic form $\omega_0$ is pulled back to the standard symplectic form $\omega_0$ on $B^2(1/\sqrt{m})$ under the map $\psi_m,$ the characteristic distribution on $P_m$ can be seen to be given by
$$\ker (\omega_0|_{TP_m})=\R\partial_\varphi \subset TP_m.$$
In particular, integrating it, we obtain a trivial symplectic monodromy map on the symplectic disc leaves. This means that
\begin{lem}
For any simple closed curve $\gamma \subset B^2(1/\sqrt{m}),$ the image $\psi_m(\gamma \times S^1) \subset (\C^2,\omega_0)$ is an embedded Lagrangian torus.
\end{lem}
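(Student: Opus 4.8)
The plan is to exploit the explicit product structure of the foliation map $\psi_m$. The key observation is that $\psi_m$ is, by construction, a symplectomorphism from the model $(B^2(1/\sqrt{m})\times S^1, \omega_0 \oplus 0)$ — more precisely from a neighbourhood in $B^2(1/\sqrt{m}) \times T^*S^1$ with its standard symplectic form, restricted to the zero section direction — onto the probe $P_m$, pulling back $\omega_0$ on $\C^2$ to the standard $\omega_0$ on the disc factor $B^2(1/\sqrt{m})$. Since the characteristic foliation of $P_m$ is $\R\partial_\varphi$ and the induced symplectic monodromy on the disc leaves is trivial (this is exactly what the computation $\ker(\omega_0|_{TP_m}) = \R\partial_\varphi$ and the preceding sentences establish), the set $\psi_m(\gamma\times S^1)$ is the total space of a trivial $S^1$-bundle over $\gamma$ sitting inside $\C^2$.

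The first step is to observe that $\gamma\times S^1 \subset B^2(1/\sqrt{m})\times S^1$ is a smooth embedded torus: it is the image of $S^1\times S^1$ under $(s,\varphi)\mapsto(\gamma(s),\varphi)$, which is an embedding because $\gamma$ is a simple closed curve (embedded circle) and the map extends smoothly, noting that $\gamma$ does not pass through the singular point $0$ when viewed appropriately — or, even if it encircles $0$, the product with $S^1$ is still a smooth torus since the singularity of the momentum picture is only in the base coordinates and $\psi_m$ was already checked to extend smoothly over $\{0\}\times S^1$. Since $\psi_m$ is a diffeomorphism onto $P_m$, the image $\psi_m(\gamma\times S^1)$ is an embedded torus in $\C^2$.

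The second step is to check the Lagrangian condition. Because $\psi_m^*\omega_0 = \omega_0|_{B^2(1/\sqrt{m})}$ (the pulled-back form has no component in the $d\varphi$ direction, which is precisely the statement that the characteristic distribution is $\R\partial_\varphi$), it suffices to verify that $\gamma\times S^1$ is isotropic for $\omega_0|_{B^2(1/\sqrt{m})}$, i.e. that this $2$-form vanishes on $T(\gamma\times S^1)$. But $T(\gamma\times S^1)$ is spanned by $\dot\gamma(s)$ (a vector tangent to the disc) and $\partial_\varphi$ (a vector in the $S^1$ direction, on which $\omega_0|_{B^2(1/\sqrt{m})}$ evaluates to zero since that form only involves disc coordinates). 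Hence $\omega_0|_{B^2(1/\sqrt{m})}$ restricted to any such pair is zero, so $\psi_m(\gamma\times S^1)$ is Lagrangian in $(\C^2,\omega_0)$. Finally, it is a torus since it is diffeomorphic to $S^1\times S^1$.

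I do not expect any genuine obstacle here: the content of the lemma is essentially repackaging the already-established facts that $\psi_m$ pulls back $\omega_0$ to the standard form on the disc factor and that the monodromy is trivial. The only point requiring a moment's care is the smoothness at the singular fibre $\{0\}\times S^1$ when $\gamma$ passes near or around the origin, but this is handled by the remark that $\psi_m$ extends smoothly over $\{0\}\times S^1$, so the image of any smooth torus is again a smooth torus.
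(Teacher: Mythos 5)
Your argument is correct and is essentially the same as the paper's (the paper treats the lemma as an immediate consequence of the preceding computation that $\psi_m^*\omega_0$ equals the standard area form on the disc factor, with no $d\varphi$ component, so that $T(\gamma\times S^1)=\operatorname{span}\{\dot\gamma,\partial_\varphi\}$ is isotropic). Your only wobble is the aside about $\gamma$ ``encircling'' the origin, where the relevant (and harmless) case is $\gamma$ passing \emph{through} the origin; as you note, $\psi_m$ is a smooth embedding on all of $B^2(1/\sqrt{m})\times S^1$, so this causes no problem.
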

For example, the monotone Clifford torus of symplectic action $\pi/3$ is given by
$$ L_0 := \psi_2(S^1(1/\sqrt{3}) \times S^1) \subset S^3(\sqrt{2/3}).$$
Considering a suitable smooth family of simple closed curves that all bound the area $\pi/3$ inside $B^2(1/\sqrt{3})$ we obtain a Hamiltonian isotopy
$$ L_t := \psi_2(\gamma_t \times S^1) \subset P_2 \subset (\C^2,\omega_0)$$
of Lagrangian tori. We will take $L_0$ to be the Clifford torus while $L_1$ the torus obtained from the curve $\gamma_1 \subset B^2(1/\sqrt{2}) \setminus \{0\}$ shown in Figures \ref{fig:probe1} and \ref{fig:probe2}. Note that this isotopy $L_t$ of tori intersects the subset $S^1 \times \{0\} \subset D^4$ for some $t \in (0,1)$, since not all curves $\gamma_t$ can be disjoint from the origin in $B^2(1/\sqrt{2})$ for obvious topological reasons 

\begin{lem}[Gadbled~\cite{Gadbled:OnExotic}]
The torus $L_1 \subset B^4$ is Hamiltonian isotopic to the Chekanov torus when considered inside the completion
$$(\CP^2,\omega_{\OP{FS}})\supset (\CP^2 \setminus \ell_\infty,\omega_{\OP{FS}})\cong (B^4,\omega_0) \supset L_1$$
of the ball.
\end{lem}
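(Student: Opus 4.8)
The plan is to recognise the map $\psi_2$ as the standard twist that produces the Chekanov torus and then to match $L_1$ with the model used in \cite{Gadbled:OnExotic}. First I would rewrite $\psi_m$ in complex notation: for $w=re^{i\theta}\in B^2(1/\sqrt m)$ and $\varphi\in S^1$ one has $\psi_m(w,\varphi)=(z_1,z_2)$ with $z_1=\sqrt{1-m|w|^2}\,e^{i\varphi}$ and $z_2=w\cdot(z_1/|z_1|)^m$, so that
$$\psi_m(\gamma\times S^1)=\left\{(z_1,z_2)\in\C^2\ :\ z_2\,(\overline{z_1}/|z_1|)^m\in\gamma,\ \ \pi|z_1|^2=\pi-m\,\pi|z_2|^2\right\}.$$
When $\gamma=S^1(c)$ is centred at the origin this is the product torus $S^1(\sqrt{1-mc^2})\times S^1(c)$, whereas for an embedded loop $\gamma\subset B^2(1/\sqrt m)\setminus\{0\}$ of winding number zero about the origin it is precisely the twist torus built from $\gamma$ in the sense of \cite{Chekanov:TwistTori}; this is the presentation of the Chekanov torus adopted in \cite{Gadbled:OnExotic}.

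It then remains to identify $L_1$ with the \emph{monotone} representative. Since $m=2$ and $\gamma_1$ encloses a region of area $\pi/3$ inside $B^2(1/\sqrt2)$, the symplectic action class $\sigma_{L_1}\in H^1(L_1;\R)$ read off from the above parametrisation takes values in $\Z\,\pi/3$, which together with the Maslov class is exactly the normalisation of \cite{Chekanov:TwistTori} under which the twist torus becomes monotone in $(\CP^2,\omega_{\OP{FS}})$ with $\int_\ell\omega_{\OP{FS}}=\pi$. Finally, any two embedded loops in $B^2(1/\sqrt2)\setminus\{0\}$ of winding number zero enclosing the same area are joined by a compactly supported area-preserving isotopy of $B^2(1/\sqrt2)$ disjoint from the origin; lifting such an isotopy through $\psi_2$ and extending it by the trivial symplectic monodromy of the probe $P_2$ exactly as in the construction of the family $L_t$ above yields an ambient Hamiltonian isotopy of $\C^2\subset\CP^2$ carrying $L_1$ onto the standard monotone Chekanov torus of \cite{Gadbled:OnExotic}.

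The main work is bookkeeping rather than geometry: one must check that the affine chart $B^4=\CP^2\setminus\ell_\infty$ used here agrees, up to a Hamiltonian isotopy of $\CP^2$, with the chart in which \cite{Gadbled:OnExotic} places the Chekanov torus, and that the computation of $\sigma_{L_1}$ from the $\psi_2$-parametrisation is consistent with the chosen normalisation $\int_\ell\omega_{\OP{FS}}=\pi$ and with the monotonicity count of \cite{Chekanov:TwistTori}. A minor point worth recording is that the interpolating family $\gamma_t$ from $S^1(1/\sqrt3)$ to $\gamma_1$ necessarily sweeps through the origin of $B^2(1/\sqrt2)$ (equivalently through $S^1\times\{0\}\subset D^4$), so it is not a family of loops in the punctured disc for every $t$; this is harmless for the present lemma, which only concerns the endpoint $L_1$, and the Hamiltonian isotopy invoked above is instead the one relating $\gamma_1$ to Gadbled's loop within the punctured disc. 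Once these normalisations are matched, the lemma is just \cite{Gadbled:OnExotic} restated in the present notation.
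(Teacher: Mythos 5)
Your argument is correct and follows essentially the same route as the paper: both reduce the lemma to Gadbled's identification of the ``disc-fibration over a loop of winding number zero'' construction with the Chekanov torus, up to matching normalisations and an ambient Hamiltonian isotopy of $\CP^2$. The one piece of bookkeeping you flag but leave open --- that the affine chart used here agrees with Gadbled's up to a Hamiltonian isotopy of $\CP^2$ --- is exactly what the paper's proof makes explicit, namely that the two representatives differ by the linear change of coordinates $[Z_1:Z_2:Z_3]\mapsto[Z_3:Z_2:Z_1]$, which lies in the (connected) group of Hamiltonian projective unitary transformations.
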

\begin{proof}
The representative of the Chekanov torus described in~\cite{Gadbled:OnExotic} differs from $L_1$ simple by the linear change of coordinates
$$[Z_1:Z_2:Z_3] \mapsto [Z_3,Z_2,Z_1].$$
In particular, the torus is clearly Hamiltonian isotopic to the standard presentation of the Chekanov torus.
\end{proof}

The claim that $L_1$ is not Hamiltonian isotopic to the standard product torus inside $B^4$ then finally follows from the fact that the monotone Clifford and Chekanov tori inside the compactification $(\CP^2,\omega_{\OP{FS}})$ are not Hamiltonian isotopic as shown by Chekanov and Schlenk \cite{Chekanov:TwistTori}. We can thus take $L \coloneqq L_1$. \qed

\subsection{Proof of Theorem \ref{thm:knotted}}
\label{proof:sympknotted}

  The key point is that the Hamiltonian isotopy from $S^1(1/\sqrt{3}) \times S^1(1/\sqrt{3})$ to $L$ inside $\C^2$ that was constructed above can be taken to fix the hypersurface
  $$\{ \|z_1\|^2+2\|z_2\|^2 = 1\} \cap \{z_1 \neq 0\}\subset D^4$$
  setwise; this is the hypersurface that contains the ``probe'' $P_2$ as well as the two Lagrangian tori. To see this, it is convenient to extend the embedding $\psi_2$ of the probe constructed in the same section to a symplectic embedding
\begin{gather*}
\Psi_2 \colon B^2(\sqrt{(1-\delta^2)/2}) \times (-\delta,\delta) \hookrightarrow S^1 \to \C^* \times \C,\\
((r,\theta),(s,\varphi)) \mapsto (s+\sqrt{1-2r^2},\varphi), (r,\theta+2\varphi)),
\end{gather*}
defined using polar coordinate for some small $\delta>0.$ Note that $\omega_0$ pulls back to the product symplectic form $\omega_0 + sds\wedge d\varphi$ on
$$B^2(\sqrt{(1-\delta^2)/2}) \times ((-\delta,\delta) \times S^1),$$
while the restriction $\Psi_2|_{\{s=0\}}=\psi_2$ is the original embedding of the probe $P_2$ constructed above.

One can then realise the Hamiltonian isotopy of the torus in the probe by a suitable lift of a Hamiltonian isotopy of $(B^2(\sqrt{(1-\delta^2)/2}),\omega_0)$ that is generated by a compactly supported Hamiltonian, to yield a Hamiltonian isotopy of the product
$$(B^2(\sqrt{(1-\delta^2)/2}) \times ((-\delta,\delta) \times S^1),\omega_0 + sds\wedge d\varphi )$$
of symplectic manifolds. The sought Hamiltonian is finally produced by multiplication with a suitable smooth bump function.

\qed

\bibliographystyle{plain}
\bibliography{references}

\end{document}